\documentclass[11pt,reqno]{amsart}

\usepackage[T1]{fontenc}
\usepackage[utf8]{inputenc}
\usepackage[english]{babel}
\usepackage{microtype}
\usepackage{fancyhdr}

\usepackage{amssymb}
\usepackage{mathrsfs}
\usepackage{stmaryrd}
\usepackage[inline]{enumitem}
\usepackage{xcolor}
\usepackage{comment}
\PassOptionsToPackage{hyphens}{url}\usepackage{hyperref}
\usepackage{aliascnt}
\usepackage[nameinlink,capitalise,noabbrev]{cleveref}

\hypersetup{
  colorlinks=true,
  linkcolor=blue!55!black,
  citecolor=blue!55!black,
  urlcolor=blue!55!black,
  pdftitle={Power Semigroups and Rigidity Theorems for Groups},
  pdfauthor={Liu Shuolin and Salvatore Tringali}
}

\theoremstyle{definition}

\newtheorem{theorem}{Theorem}[section]
\newtheorem{proposition}[theorem]{Proposition}

\newtheorem{questions}[theorem]{Questions}
\newtheorem{lemma}[theorem]{Lemma}

\newtheorem{definition}[theorem]{Definition}

\theoremstyle{remark}

\newtheorem{remark}[theorem]{Remark}
\newtheorem{remarks}[theorem]{Remarks}

\newtheoremstyle{theoremdd}% name of the style to be used
  {\topsep}% measure of space to leave above the theorem. E.g.: 3pt
  {\topsep}% measure of space to leave below the theorem. E.g.: 3pt
  {\normalfont}% name of font to use in the body of the theorem
  {0pt}% measure of space to indent
  {\scshape}% name of head font
  {:}% punctuation between head and body
  { }% space after theorem head; " " = normal interword space
  {\indent\thmname{#1}\thmnumber{ #2}\textnormal{\thmnote{ (#3)}}}

\theoremstyle{theoremdd}

\newcommand{\Pfin}{\mathcal P_{\mathrm{fin}}}
\providecommand\llb{\llbracket}
\providecommand\rrb{\rrbracket}
\newcommand{\evid}[1]{\textsf{#1}}
\newcommand{\fin}{\mathrm{fin}}
\providecommand\llb{\llbracket}
\providecommand\rrb{\rrbracket}

\renewcommand{\emptyset}{\varnothing}
\renewcommand{\setminus}{\smallsetminus}
\renewcommand{\,}{\kern 0.1em}

\title[Power Semigroups and Two Rigidity Theorems for Groups]
{Power Semigroups and Two Rigidity Theorems for Groups}

\author{Shuolin Liu}
\address{(S.L.) School of Mathematical Sciences, Hebei Normal University | Shijiazhuang, Hebei Province, 050024, China}
\email{liushuolin77@gmail.com}

\author{Salvatore Tringali}
\address{(S.T.) School of Mathematical Sciences, Hebei Normal University | Shijiazhuang, Hebei Province, 050024, China}
\email{salvo.tringali@gmail.com}
\urladdr{https://salvo-tringali.github.io/home/}

\subjclass[2020]{20M10}

\keywords{Group, isomorphism problem, power semigroup, rigidity theorem}

\begin{document}

\begin{abstract}
Let $\mathcal P(H)$ be the semigroup obtained by endowing the family of all non-empty subsets of a semigroup $H$ with the setwise operation naturally induced by $H$ on its power set, and denote by $\mathcal P_\text{fin}(H)$ the subsemigroup of $\mathcal P(H)$ consisting of all non-empty finite subsets of $H$.

We obtain (as a corollary of a theorem of independent interest) that if $H$ is a group and $K$ is a semigroup, then $\mathcal P(H) \cong \mathcal P(K)$ implies $H \cong K$. The finitary analogue of this statement is considerably more difficult, and we prove it only for $H$ an additive subgroup of the rationals. 
Most notably, the proof of the second result relies, in a rather circuitous way, on a special case of the Evertse--Schlickewei--Schmidt theorem.
\end{abstract}
\maketitle

% -----------------------------------------------------------------------------
\section{Introduction}
\label{sect:1}
Let $H$ be a semigroup (see the end of the section for notation and terminology). The family of all non-empty subsets of $H$ is itself a semigroup, here denoted by $\mathcal P(H)$ and referred to as the 
\evid{large power semigroup} of $H$, when endowed with the binary operation of 
setwise multiplication induced by $H$ on its power set and defined by 
\[
AB := \{ab: a \in A,\, b \in B\}, \qquad\text{for all } A, B \subseteq H.
\]
Moreover, the family $\Pfin(H)$ of all non-empty \textit{finite} subsets of $H$ is a subsemigroup of $\mathcal P(H)$, called the \evid{finitary power semigroup} of $H$. These structures are generically dubbed \evid{power semigroups}.

If the semigroup $H$ is written additively, we adopt the same convention for any subsemigroup of $\mathcal P(H)$. This amounts, in practice, to the fact that the operation on
$\mathcal P(H)$ maps a pair $(A, B)$ of non-empty subsets of $H$ to their \evid{sumset}
\[
A + B := \{a + b: a \in A,\, b \in B\},
\]
as opposed to the setwise product of $A$ by $B$ used in the multiplicative setting.

The systematic investigation of power semigroups began with the work of Tamura and Shafer~\cite{Tam-Sha-1967} in the late 1960s, although Bailieu~\cite{bal-1950} had already studied the subgroups of the large power semigroup of a group in 1950. Tamura, in particular, maintained a lifelong interest in the following problem (here and later, $\cong$ denotes semigroup isomorphism).

\begin{questions}
\label{que:tamura-shafer}
Let $\mathcal O$ be a class of semigroups. Prove or disprove that, for all $H, K \in \mathcal O$, 
\begin{enumerate}[label=\textup{(\arabic{*})}]
\item\label{que:tamura-shafer(1)}
$\mathcal P(H) \cong \mathcal P(K)$ if and only if $H \cong K$.
\item\label{que:tamura-shafer(2)} $\mathcal P_\fin(H) \cong \mathcal P_\fin(K)$ if and only if $H \cong K$.
\end{enumerate}
\end{questions}

A class $\mathcal{O}$ for which Question~\ref{que:tamura-shafer}\ref{que:tamura-shafer(1)} has a positive answer is sometimes said to be \evid{globally determined}. This terminology stems from the fact that an isomorphism from the large power 
semigroup of a semigroup $H$ to that of a semigroup $K$ is often referred to as a \evid{global isomorphism} from $H$ to $K$ (by the same token, some 
authors call $\mathcal{P}(H)$ the \evid{global} of $H$).

While the class of \emph{all} semigroups is not globally 
determined~\cite{Mog-1973}, many interesting subclasses are,  including groups \cite{Sha-1967},
semilattices \cite[p.~218]{Kob-1984}, Clifford semigroups
\cite[Theorem~4.7]{Gan-Zha-2014}, and cancellative \emph{commutative} semigroups \cite[Corollary~1]{Tri-2025(c)}. Question \ref{que:tamura-shafer}\ref{que:tamura-shafer(1)} remains open for \textit{finite} semigroups (despite some authors having claimed otherwise):

\begin{center}
\url{https://mathoverflow.net/questions/508548/}
\end{center}

As for Question \ref{que:tamura-shafer}\ref{que:tamura-shafer(2)}, it appears to have been first formulated as Question~4(1) in \cite{GarSan-Tri2025(a)}, and little is known beyond the case where $H$ is finite, or equivalently $\mathcal{P}_{\fin}(H) = \mathcal{P}(H)$.

After a period of intense activity in the 1970s--1990s, power
semigroups entered a phase of relative dormancy until they were
independently rediscovered in a 2018 paper by Fan and Tringali
\cite{Fa-Tr18}. Since then, the subject has gained renewed momentum, with a number of papers addressing a broad spectrum of questions, from the study of automorphism groups
\cite{Tri-Yan2025(b), Tri-Wen-2026(a), Rago26(b), Tri-Wen-2026(b), Wong2026}
to new kinds of isomorphism problems
\cite{Bie-Ger-22, Tri-Yan2025(a), GarSan-Tri2025(a), Rago26,
Tri-Yan2026(a), Rago26(c)} and the investigation of arithmetic properties
\cite{An-Tr18, Bie-Ger-22, 
Gonz-Li-Rabi-Rodr-Tira-2025, Cos-Tri2025}. We refer the reader to \cite{Tri-Survey} for a survey of these
recent developments, while noting that significant contributions to some of these lines of research had already been made by Byrd et al.~\cite{Byr-Llo-Ped-Ste-1977, Byr-Llo-Men-Tel-1978, Byr-Llo-Ste-1982, Byr-Llo-Ped-Ste-1984} in the late 1970s and early 1980s.

In the present paper, we focus on two problems that
have not yet received much attention in the literature, despite
being quite natural and closely related to
Questions~\ref{que:tamura-shafer}.

\begin{questions}
\label{que:2}
Let $\mathcal O$ be a class of semigroups that is closed under isomorphisms, and let $K$ be an arbitrary semigroup. Prove or disprove that 
\begin{enumerate}[label=\textup{(\arabic{*})}]
\item\label{que:2(1)} if $H \in \mathcal O$ and $\mathcal P(H) \cong \mathcal P(K)$, then $K \in \mathcal O$.
\item\label{que:2(2)} if $H \in \mathcal O$ and $\mathcal P_\fin(H) \cong \mathcal P_\fin(K)$, then $K \in \mathcal O$.
\end{enumerate}
\end{questions}

We show in Section~\ref{sect:2} that the answer to
Question~\ref{que:2}\ref{que:2(1)} is positive when
$\mathcal O$ is the class of groups. More precisely, we establish the following ``rigidity theorem''.

\begin{theorem}
\label{thm:groups-are-P-closed}
If $H$ is a group and $K$ is a semigroup with $\mathcal P(H) \cong \mathcal P(K)$, then $H \cong K$.
\end{theorem}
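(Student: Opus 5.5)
Let $\varphi\colon \mathcal P(H)\to\mathcal P(K)$ be an isomorphism. The plan is to show that $K$ must be a group; then the conclusion follows from Shafer's theorem that groups are globally determined \cite{Sha-1967}. So the real content is to extract the group structure of $K$ from purely semigroup-theoretic properties of $\mathcal P(H)$ that any isomorphism preserves.

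First, I list such properties of $\mathcal P(H)$ for a group $H$. The singleton $\{1_H\}$ is an identity. The set $H$ itself is a two-sided zero, since $AH=HA=H$ for every non-empty $A$. The units of $\mathcal P(H)$ are exactly the singletons: if $AB=\{1\}$, then $ab=1$ for all $a\in A$ and $b\in B$, so $A$ and $B$ are singletons. Hence the unit group $U(\mathcal P(H))$ is isomorphic to $H$.

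Under $\varphi$, the semigroup $\mathcal P(K)$ therefore has an identity $E$ and a zero $Z$, and its unit group is isomorphic to $H$. It remains to show that $E$ is a singleton $\{e\}$, that $e$ is an identity of $K$, and that the units of $\mathcal P(K)$ are singletons $\{u\}$ with $u$ a unit of $K$, and then that every element of $K$ is a unit.

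Step 1: $E$ is a singleton. Take $x\in E$. From $E\{x\}=\{x\}$ we get $Ex=\{x\}$, and similarly $xE=\{x\}$; in particular $x^2=x$ for every $x\in E$. If $x,y\in E$, then $xy\in Ey=\{y\}$ and $xy\in xE=\{x\}$, so $x=y$. Thus $E=\{e\}$, and $\{e\}A=A=A\{e\}$ for all $A$ shows that $e$ is an identity of $K$.

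Step 2: units are singletons. Suppose $AB=BA=\{e\}$. Then $ab=e$ for all $a\in A$ and $b\in B$. Fixing $b$, any $a,a'\in A$ satisfy $ab=a'b=e$, and $b$ has the left inverse $a''\in A$ coming from $BA=\{e\}$. Hence $a=a(ba'')=\dots$; more precisely, $b$ is invertible in $K$, so $a=a'=b^{-1}$. Thus $A$ is a singleton, and the units of $\mathcal P(K)$ are $\{\{u\}: u\in U(K)\}$.

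Step 3: $K=U(K)$. This is the main obstacle, because cardinalities and set-theoretic features are not preserved by $\varphi$. I would aim for an intrinsic characterization: in $\mathcal P(H)$, every element $X$ satisfies $X=\bigcup$ of units, which is not algebraic. A better invariant is the following: for every non-zero $X$, there is a unit $U$ with $UX\neq X$ unless $X$ is a non-trivial left coset-like set. That is messy. Instead, I would use the fact that in $\mathcal P(H)$ the divisibility preorder $A\mid_l B$ (meaning $B=AC$ for some $C$) satisfies $\{u\}\mid_l A$ for every $A$ and every unit $u$. More usefully, the zero satisfies $Z=UZ$ for all units $U$, and, crucially, in $\mathcal P(H)$ one has $H=\{u\}^{-1}\cdots$.

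The cleanest route I would try is this. In $\mathcal P(H)$, an element $X$ is \emph{atom-like under units} in the following sense: the elements $A$ with $AB=Z$ for some $B$ include everything. For each non-empty $A\subseteq K$ I would show $Z=K$ first. Since $Z\{k\}=Z$ for all $k$, we get $Zk=Z$, and together with $KZ\subseteq Z$ this gives $Z$ an ideal with $Z=ZK$. Then I would transport the following property: in $\mathcal P(H)$, every $X$ satisfies $X\cdot Y=Z$ for $Y=Z$, and every $X$ is a union-free element dividing only via units. Concretely, I would exploit the characterization in $\mathcal P(H)$ that the minimal elements of the ``left-divides'' order modulo units are exactly the units. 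Transferring this, each $\{k\}$ must be left-divisible only trivially, which forces $\{k\}=\{u\}C$ patterns to give left and right inverses of $k$ inside $K$.

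Once $K$ is shown to be a group, Shafer's theorem gives $H\cong K$.
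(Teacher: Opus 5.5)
\textbf{Gap: Step 3 is not proved.} Your Steps 1 and 2 are correct. They show that $K$ is a monoid with identity $e$, and that the units of $\mathcal P(K)$ are exactly the singletons $\{u\}$ with $u\in K^\times$. But Step 3, which you rightly call the main obstacle, is only a list of tentative strategies, and those concrete enough to check do not work.
\begin{itemize}
\item The facts $Zk=kZ=Z$ and $KZ\subseteq Z$ hold for the zero of $\mathcal P(K)$ in any monoid, so they cannot give $Z=K$. For instance, if $K$ has a zero element $0$, then the zero of $\mathcal P(K)$ is $\{0\}$.
\item That units left-divide every element is true in every monoid, so it cannot force $K$ to be a group.
\item The final ``transfer'' to $\{k\}$ tacitly assumes that you control $\varphi^{-1}(\{k\})$. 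A priori, $\varphi^{-1}(\{k\})$ is an arbitrary non-empty subset of $H$, since nothing says $\varphi$ preserves singletons.
\end{itemize}

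\textbf{The missing idea.} You need an element of $\mathcal P(K)$ that is characterized purely algebraically and is visibly equal to $K^\times$. Call $X$ \emph{unit-stable} if $UX=XU=X$ for every unit $U$ of the power semigroup; this notion is preserved by $\varphi$ and $\varphi^{-1}$. The argument then runs as follows.
\begin{itemize}
\item By your Step 2, the units of $\mathcal P(K)$ are the $\{u\}$ with $u\in K^\times$, so $K^\times$ is unit-stable in $\mathcal P(K)$.
\item Hence $X:=\varphi^{-1}(K^\times)$ satisfies $hX=X$ for all $h\in H$. Since $H$ is a group, $HX=H$, so $X=HX=H$.
\item Therefore $K^\times=\varphi(H)=Z$ is the zero of $\mathcal P(K)$.
\item For each $k\in K$ this gives $k=ke\in\{k\}K^\times=\{k\}Z=Z=K^\times$, so $K$ is a group.
\end{itemize}
This is essentially the paper's route, namely Theorem~\ref{thm:global-iso-maps-unit-group-to-unit-group} specialized to $H=H^\times$. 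Note also that Shafer's theorem is then not needed: $\varphi$ maps the units $\{h\}$ of $\mathcal P(H)$ onto the units $\{u\}$ of $\mathcal P(K)$, which already yields an isomorphism $H\cong K^\times=K$.
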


The proof of Theorem \ref{thm:groups-are-P-closed} is deferred to the end of Section \ref{sect:2}. While elementary in hindsight, it arises as a byproduct of a more general result of independent interest for the study of isomorphism problems on power semigroups: every global isomorphism from a monoid $H$ to a monoid $K$ restricts to a global isomorphism from the unit group of $H$ to the unit group of $K$ (Theorem~\ref{thm:global-iso-maps-unit-group-to-unit-group}).

Since $\mathcal P(H) = \mathcal P_\fin(H)$ for any finite semigroup $H$, Questions \ref{que:2}\ref{que:2(1)} and \ref{que:2}\ref{que:2(2)} coincide for finite groups. Yet, the specialization of the latter to \textit{infinite} groups appears to be much more difficult, and here we only settle it in the fundamental case where $\mathcal O$ is the class of all semigroups isomorphic to a subgroup of $\mathbb Q$. This leads to our main result, which will be proved in Section~\ref{sect:4}.

\begin{theorem}\label{thm:main}
Let $H$ be an additive subgroup of the rational numbers and $K$ be a semigroup. If $\Pfin(H) \cong \Pfin(K)$, then $H \cong K$.
\end{theorem}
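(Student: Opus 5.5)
Fix an isomorphism $\phi\colon \Pfin(H)\to\Pfin(K)$, where $H\le(\mathbb Q,+)$. The plan is to recover from the abstract semigroup $\Pfin(H)$ enough intrinsic structure to force $K$ to be a group isomorphic to $H$. Proceed in three steps: (i) show $K$ is a monoid; (ii) show the singletons of $K$ form the unit group of $\Pfin(K)$ and $K$ is a group; (iii) identify that group with $H$.

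\emph{Step (i).} $\Pfin(H)$ is a monoid with identity $\{0\}$. Hence $E:=\phi(\{0\})$ is an identity of $\Pfin(K)$. If $E=\{e_1,\dots,e_n\}$, then $E\{k\}=\{k\}$ for every $k\in K$, so $e_ik=k$ for all $i$. Symmetrically $ke_i=k$. It follows that $n=1$ and $e_1$ is an identity of $K$. Thus $K$ is a monoid, and the identity of $\Pfin(K)$ is the singleton $\{1_K\}$.

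\emph{Step (ii).} In $\Pfin(H)$ the units are exactly the singletons, because $|A+B|\ge|A|+|B|-1$ in the torsion-free group $H$. So $\phi$ maps the group $\{\{h\}:h\in H\}\cong H$ onto the unit group of $\Pfin(K)$. Now, if $AB=BA=\{1_K\}$ in $\Pfin(K)$, then $ab=1$ for all $a\in A$ and $b\in B$, and also $ba=1$. So each $a\in A$ is a unit of $K$ with inverse every $b\in B$; hence $B$, and likewise $A$, is a singleton. Therefore the unit group of $\Pfin(K)$ is $\{\{u\}:u\in K^\times\}\cong K^\times$, which gives $K^\times\cong H$. (This step is the finitary counterpart of the Theorem on units quoted above.)

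\emph{Step (iii).} It remains to show $K=K^\times$; this is the main obstacle. Since $H$ is a group, every $A\in\Pfin(H)$ satisfies $A=A+\{0\}$ trivially, so we need an intrinsic property separating singletons $\{k\}$ with $k\notin K^\times$ from images of non-singleton finite sets. I would try to characterise singletons of $K$ inside $\Pfin(K)$ purely semigroup-theoretically, for instance as the elements $X$ such that $XY=XZ$ implies $Y=Z$ and $XY$ is a "translate" of $Y$. In $\Pfin(H)$ a set $A$ with $|A|\ge2$ is \emph{not} cancellative: one can find $Y\ne Z$ with $A+Y=A+Z$, e.g.\ $A=\{0,d\}$ and suitable $Y,Z$. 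Hence the cancellative elements of $\Pfin(H)$ are exactly the singletons, i.e.\ the units, and the same must hold in $\Pfin(K)$.

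So a non-unit $k\in K$ would make $\{k\}$ non-cancellative. This is not immediate, and here one needs finer information about $\Pfin(H)$. The approach is to classify elements of $\Pfin(H)$ by their "divisibility/irreducibility" pattern: every $A\in\Pfin(H)$ is, up to units, a set of rationals scaling to integers. Then one shows that whenever $\{k\}$ is a non-unit, the chain $\{k\},\{k\}^2,\dots$ forms an infinite sequence of mutually non-associated elements, each with a unique factorisation up to units, all of whose divisors are of the same kind. In $\Pfin(H)$ this would correspond to a set $A$ all of whose powers $nA$ have only "trivial" factorisations. Ruling out such $A$ with $|A|\ge2$ is exactly where finiteness of solutions of unit-type equations enters: the relevant factorisations $nA=B+C$ translate, after clearing denominators, into polynomial identities of sparse polynomials. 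I expect to invoke a special case of the Evertse--Schlickewei--Schmidt theorem (finitely many non-degenerate solutions of linear equations in a finitely generated multiplicative group) to force such factorisations to exist for large $n$, or to bound them, thereby contradicting the behaviour of $\{k\}^n$.

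Once every element of $K$ is a unit, $K=K^\times\cong H$, which completes the proof of the Theorem.
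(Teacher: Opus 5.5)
\textbf{There is a genuine gap: Step (iii) contains no argument.} Your Steps (i) and (ii) are correct and coincide with the paper's Lemma~\ref{lem:identity-reconstruction} and Lemma~\ref{lem:unit-reconstruction}. So $K$ is a monoid, $K^\times\cong H$, and the theorem reduces to showing $K=K^\times$. That reduction is exactly the whole difficulty, and Step (iii) does not supply it.

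Your cancellativity observation only shows that, for a non-unit $k$, the singleton $\{k\}$ is non-cancellative in $\Pfin(K)$. In other words, $k$ is a non-cancellative element of $K$. Since $K$ is an arbitrary semigroup, this contradicts nothing.

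The later assertions are stated without proof: that the powers $\{k\}^n$ are pairwise non-associated, that they have unique factorisations up to units, and that $n\,\phi^{-1}(\{k\})$ has only ``trivial'' factorisations. Nothing in an arbitrary semigroup forces a factorisation $XY=\{k^n\}$ to have singleton factors. Finally, ``invoke Evertse--Schlickewei--Schmidt to force such factorisations to exist or to bound them'' names no unit equation, no finitely generated group $\Gamma$, and no quantity that $\phi$ would transport. As it stands, nothing in the proposal excludes a non-unit in $K$.

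\textbf{What is missing.} You need a quantity that is invariant under $\phi$, computable exactly on the $H$ side, and arithmetically constrained on the $K$ side. You also need a mechanism by which a non-unit of $K$ triggers that constraint. The paper's argument runs as follows.
\begin{enumerate}
\item \emph{Splitting.} The minimizer $\mu\colon\Pfin(H)\to H$ is a homomorphism. Composing $y\mapsto\mu(\phi^{-1}(\{y\}))$ with your isomorphism $H\to K^\times$ gives a retraction $K\to K^\times$. Using that $K$ is commutative (which transfers through $\phi$), this yields $K\cong L\times H$ with $L^\times$ trivial and $\min\phi^{-1}(\{y\})=0$ for $y\in L$ (Proposition~\ref{prop:structural-splitting}).
\item \emph{A non-unit bounds $|B^n|$.} Take $y\ne 1_K$ in $L$. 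Then $A:=\phi^{-1}(\{y\})$ is a non-singleton set with minimum $0$ that generates some $\mathbb Z q$ with $q>0$, and $\max A=mq$ for some $m\ge1$. Lemma~\ref{lem:intervals} gives $A+nQ=(n+1)Q$ for $Q:=m\{0,q\}$, so $\phi(Q)^{n+1}=y\,\phi(Q)^n$. With $B:=\phi(\{0,q\})$ this yields $|B^{mn}|\le|\phi(Q)|$ for all $n$ (Lemma~\ref{lem:bounding-the-size-of-powers}).
\item \emph{The invariant count.} Via $\phi^{-1}$, the number of $Y$ with $BY=B^k$ equals the number of $X$ with $\{0,q\}+X=k\{0,q\}$, which is $F_k$ (Proposition~\ref{prop:fib-count}).
\item \emph{The constraint on the $K$ side.} Inclusion--exclusion over $\{z\in K: Bz\subseteq B^k\}$ writes this same count as a signed sum of at most $2^{|B^k|}$ powers of $2$ (Proposition~\ref{prop:fiber-counting}).
\item \emph{Contradiction.} Hence $F_{mn}$ would have signed binary representations of bounded length for infinitely many $n$. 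Binet's formula combined with the Evertse--Schlickewei--Schmidt theorem rules this out (Lemma~\ref{lem:fib-binary-representations}).
\end{enumerate}
None of these ingredients appears in your sketch.
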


Additive subgroups of $\mathbb{Q}$ are completely classified by Baer's classical work \cite{Bae-1937}. For instance, the theorem applies when
$H$ is $\mathbb Q$ itself, $\mathbb Z$, or, for a prime $p$, the group
\[
\mathbb Z[1/p] := \{a/p^n: a \in \mathbb Z \text{ and } n \in \mathbb N\}.
\]

The proof begins by establishing that $K \cong \allowbreak L \times H^\times$ for a certain monoid $L$ with trivial unit group (Proposition \ref{prop:structural-splitting}). The goal is then to demonstrate that $L$ is itself trivial, which is achieved in two steps. First, we find that, for every $n \in \mathbb{N}^+$, the number of sets 
$X \in \mathcal{P}_{\text{fin}}(\mathbb N)$ such that
\[
\{0, 1\} + X = \{0, 1, \ldots, n\}
\]
is the $n$-th Fibonacci number~$F_n$ (Proposition~\ref{prop:fib-count}). 
Second, we prove that, if $L$ is non-trivial, then there exist infinitely 
many~$n$ for which~$F_n$ admits a representation 
of the form $\sum_{i=1}^k 2^{e_i} \varepsilon_i$ over $\mathbb Z$, where the number $k$ 
of summands is independent of $n$, the exponents $e_i$ are non-negative 
integers, and the coefficients $\varepsilon_i$ are either $1$ or $-1$ (Definition \ref{def:indexed-signed-bin-rep}). However, 
this will be seen to be impossible (Lemma~\ref{lem:fib-binary-representations}) 
as a consequence of the following special case of a celebrated result by Evertse, Schlickewei, 
and Schmidt~\cite[Theorem 1.1]{ESS2002}.

\begin{theorem}\label{thm:ess}
If $\Gamma$ is a finitely generated subgroup of $\mathbb C^\times$, then for every $r \in \mathbb N^+$ the equation
\begin{equation}
\label{thm:ess:eq(1)}
x_1 + \cdots + x_r = 1
\end{equation}
has only finitely many non-degenerate solutions $(x_1, \ldots, x_r)$ with $x_1, \ldots, x_r \in \Gamma$.
\end{theorem}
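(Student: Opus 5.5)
The plan is not to prove this from scratch. We will deduce it directly from the quantitative theorem of Evertse, Schlickewei, and Schmidt \cite[Theorem~1.1]{ESS2002}, by checking that our hypotheses are a special case of theirs. First we fix the meaning of ``non-degenerate'': a solution $(x_1, \ldots, x_r)$ of \eqref{thm:ess:eq(1)} is non-degenerate if $\sum_{i \in I} x_i \ne 0$ for every non-empty $I \subseteq \llb 1, r \rrb$. This is the convention of \cite{ESS2002}, and it is the one needed later in Lemma~\ref{lem:fib-binary-representations}.

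In \cite{ESS2002} the setting is as follows. One takes a subgroup $\Gamma'$ of the multiplicative group $(\mathbb C^\times)^r$ of finite rank $\rho$, so that $\Gamma'/\Gamma'_{\rm tors}$ has a maximal free subgroup of rank $\rho$. One also fixes coefficients $a_1, \ldots, a_r \in \mathbb C^\times$. The result then bounds the number of non-degenerate solutions $(x_1, \ldots, x_r) \in \Gamma'$ of
\[
a_1 x_1 + \cdots + a_r x_r = 1
\]
by an explicit quantity, namely $\exp\bigl((6r)^{3r}(\rho+1)\bigr)$, which depends only on $r$ and $\rho$. We apply this with $a_1 = \cdots = a_r = 1$ and $\Gamma' := \Gamma^r = \Gamma \times \cdots \times \Gamma$.

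The only step that needs checking is that $\Gamma^r$ has finite rank. Since $\Gamma$ is generated by finitely many elements $g_1, \ldots, g_s$, the group $\Gamma^r$ is generated by the $rs$ elements having one coordinate equal to some $g_j$ and all others equal to $1$. A finitely generated abelian group has finite rank, here at most $rs$. So the theorem applies, and the set of non-degenerate solutions in $\Gamma^r$ is finite, in fact bounded independently of the particular $\Gamma$ of given rank.

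None of these steps is a genuine obstacle; the depth lies entirely in the cited result. If a self-contained route were wanted, the hard step would be the finiteness statement itself, which goes back to Evertse and to van der Poorten--Schlickewei. It follows from Schmidt's Subspace Theorem in Schlickewei's $p$-adic form, applied over a number field containing generators of $\Gamma$ or over a suitable specialization when $\Gamma$ is not algebraic. That argument proceeds by induction on $r$: solutions of large height lie in finitely many proper subspaces, and inside each subspace one gets a shorter equation of the same type. We would simply quote \cite[Theorem~1.1]{ESS2002} rather than reproduce this.
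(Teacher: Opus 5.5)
Your proposal is correct and matches the paper, which gives no separate proof and presents the statement as the special case of \cite[Theorem~1.1]{ESS2002} with all coefficients equal to $1$ and the finitely generated, hence finite-rank, group $\Gamma^r \subseteq (\mathbb C^\times)^r$, exactly as you check. Your non-degeneracy condition also admits $I = \llb 1, r \rrb$; this is harmless because that sum equals $1$, so it agrees with the paper's definition, which uses only proper subsets.
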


Here, a solution $(x_1, \ldots, x_r)$ of Eq.~\eqref{thm:ess:eq(1)} is
\evid{non-degenerate} if $\sum_{i \in I} x_i \ne 0$ for every non-empty proper subset $I$ of the interval $\llb 1, r \rrb$.

\subsection*{Generalities} 
We denote by $\mathbb N$ the additive monoid of non-negative integers, by $\mathbb N^+$ the set of positive integers, by $\mathbb Z$ the additive group of integers, by $\mathbb Q$ the additive group of rationals, by $\mathbb C$ the complex field, by $|X|$ the cardinality of a set $X$, and by $2^X$ the power set of $X$.

Unless stated otherwise, we reserve the letters $i$ and $k$ (with or without subscripts) for non-negative integers, and the letters $m$ and $n$ for positive integers.
Given $a, b \in \mathbb N$, we let $\llb a, b \rrb := \{x \in \allowbreak \mathbb N \colon \allowbreak a \le x \le b\}$ be the (\evid{discrete}) \evid{interval} from $a$ to $b$.

We address the reader to \cite{Ho95} for basic aspects of semigroup theory. If not explicitly specified, we write all semigroups multiplicatively. Accordingly, we use the symbol $1_H$ for the identity (element) of a monoid $H$.
A \evid{unit} of $H$ is then an element $u \in H$ for which there exists a provably unique element $v \in H$, denoted by $u^{-1}$ and called the \evid{inverse} of $u$ (in $H$), with the property that $uv = vu = 1_H$. The set of all units of $H$ is a subgroup of $H$, denoted by $H^\times$ and referred to as the \evid{unit group} of $H$. A monoid is \evid{trivial} if its only element is the identity. If $R$ is a unital ring, we write $R^\times$ for the unit group of its multiplicative monoid.

Further notation and ter\-mi\-nol\-o\-gy, if not explained when first used, are standard or should be clear from the context. Most notably, all morphisms considered through this paper are \textit{semigroup} homomorphisms (unless otherwise stated), and for a function $f: A \to B$ and a set $X \subseteq A$ we let $f[X]$ be the \evid{direct image} $\{f(x): x \in X\}$ of $X$ under $f$.
\section{The infinitary setting}
\label{sect:2}

In this section, we provide a proof of Theorem \ref{thm:groups-are-P-closed}. In the process, we derive a result on global isomorphisms (Theorem \ref{thm:global-iso-maps-unit-group-to-unit-group}) that may be of independent interest.

\begin{definition}
\label{def:unit-stable-element}
Given a monoid $H$, we say that an element $x \in H$ is \evid{unit-stable} if $x = ux = xu$ for all $u \in H^\times$, where $H^\times$ denotes the unit group of $H$. 
\end{definition}

Every element of a monoid with trivial unit group is unit-stable, whereas no element of a non-trivial group is unit-stable. It is this contrast in behavior, together with the following remarks, that underlies the proof of Theorem \ref{thm:groups-are-P-closed}.

\begin{remarks}
\label{remarks:units-and-unit-stability}
\begin{enumerate*}[label=\textup{(\arabic{*})}, mode=unboxed]
\item\label{remarks:units-and-unit-stability(1)} Let $H$ be a monoid and $X$ be a non-empty subset of $H$. Since $1_H \in H^\times$ and hence $X = \allowbreak X 1_H \subseteq XH^\times$, it is clear that $XH^\times = H^\times$ yields $X \subseteq H^\times$. On the other hand, we have $uH^\times = \allowbreak H^\times$ for every $u \in H^\times$. Therefore, if $X \subseteq H^\times$, then 
\[
XH^\times = \bigcup_{u \in X} uH^\times = H^\times.
\]
By symmetry, this proves that $X \subseteq H^\times$ if and only if $XH^\times = H^\times$, if and only if $H^\times X = H^\times$.
\end{enumerate*}

\vskip 0.05cm

\begin{enumerate*}[label=\textup{(\arabic{*})}, resume, mode=unboxed]
\item\label{remarks:units-and-unit-stability(2)} 
The units of the large power monoid $\mathcal{P}(H)$ of a monoid $H$ are precisely 
the singletons $\{u\}$ with $u \in H^\times$. 
In particular, if $U, V \in \mathcal{P}(H)$ are such that $UV = VU = \{1_H\}$, 
then $uv = vu = 1_H$ for all $u \in U$ and $v \in V$. Therefore, $U$ and $V$ 
are subsets of $H^\times$. This shows that every element of $V$ is cancellative, so that $|U| \le |UV| = 1$ and hence $U = \{u\}$ for some $u \in H^\times$.\\

\indent{}It follows that a non-empty subset $X \subseteq H$ is unit-stable as an element of 
$\mathcal{P}(H)$ if and only if 
\[
uX = Xu = X, \qquad \text{for all } u \in H^\times,
\]
which is equivalent to 
\[
H^\times X = \bigcup_{u \in H^\times} uX = X. 
\]
Conversely, if $H^\times X = X$ and $u \in H^\times$, then 
\[
uX \subseteq H^\times X = X = u u^{-1}X \subseteq u H^\times X = uX.
\]
By symmetry, we conclude that $X$ is unit-stable in 
$\mathcal{P}(H)$ if and only if $
H^\times X = XH^\times = X$.
\end{enumerate*}

\vskip 0.05cm

\begin{enumerate*}[label=\textup{(\arabic{*})}, resume, mode=unboxed]
\item\label{remarks:units-and-unit-stability(3)} Let $f$ be a semigroup isomorphism from a monoid $H$ to a monoid $K$. 
It is a basic fact that $f$ sends the identity $1_H$ of $H$ to the identity $1_K$ of $K$ 
(see, e.g., the last lines of \cite[Sect.~2]{Tri-2025(c)}). Hence, $f$ restricts to an isomorphism from $H^\times$ to $K^\times$, 
and it is then routine to check that it preserves unit-stable elements.
\end{enumerate*}
\end{remarks}

We are now ready to prove the main theorem of this section, along with Theorem \ref{thm:groups-are-P-closed}.

\begin{theorem}
\label{thm:global-iso-maps-unit-group-to-unit-group}
The following hold for a global isomorphism $f$ from a monoid $H$ to a monoid $K$:
\begin{enumerate}[label=\textup{(\roman{*})}]
\item\label{prop:global-iso-maps-unit-group-to-unit-group(1)} 
$f(H^\times) = K^\times$.
\item\label{prop:global-iso-maps-unit-group-to-unit-group(2)} $f$ restricts to a global isomorphism from $H^\times$ to $K^\times$.
\end{enumerate}
\end{theorem}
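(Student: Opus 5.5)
Throughout, $f\colon \mathcal P(H)\to\mathcal P(K)$ denotes the given semigroup isomorphism. By Remarks \ref{remarks:units-and-unit-stability}\ref{remarks:units-and-unit-stability(3)}, $f$ maps the identity $\{1_H\}$ to $\{1_K\}$. It also maps the unit group of $\mathcal P(H)$ onto that of $\mathcal P(K)$ and preserves unit-stable elements. By Remarks \ref{remarks:units-and-unit-stability}\ref{remarks:units-and-unit-stability(2)}, the units of $\mathcal P(H)$ are the singletons $\{u\}$ with $u\in H^\times$. Hence $f$ induces a bijection $\{u\}\mapsto\{\phi(u)\}$ from $\{\{u\}:u\in H^\times\}$ onto $\{\{v\}:v\in K^\times\}$, where $\phi\colon H^\times\to K^\times$ is a group isomorphism. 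This does not yet say where the set $H^\times\in\mathcal P(H)$ goes, and part \ref{prop:global-iso-maps-unit-group-to-unit-group(1)} is exactly the claim $f(H^\times)=K^\times$.

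For \ref{prop:global-iso-maps-unit-group-to-unit-group(1)}, I would characterize $H^\times$ intrinsically inside the semigroup $\mathcal P(H)$, using only notions preserved by isomorphisms. My candidate characterization is that $H^\times$ is the smallest unit-stable element of $\mathcal P(H)$ that contains, or more precisely absorbs, all units. Algebraically, $X=H^\times$ is the unique element $X$ of $\mathcal P(H)$ such that
\[
UX=X \ \text{for every unit } U \text{ of } \mathcal P(H), \qquad\text{and}\qquad XX=X,
\]
and such that, for every $Y\in\mathcal P(H)$ with $UY=YU=Y$ for all units $U$, one has $XY=YX=Y$.

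The steps to verify this characterization are as follows.
\begin{itemize}
\item $H^\times$ satisfies these conditions. By Remarks \ref{remarks:units-and-unit-stability}\ref{remarks:units-and-unit-stability(2)}, any unit-stable $Y$ satisfies $H^\times Y=YH^\times=Y$. Also $H^\times$ is itself unit-stable and idempotent.
\item Any $X$ satisfying the conditions equals $H^\times$. Since $H^\times$ is unit-stable, the last condition gives $XH^\times=H^\times$, and Remarks \ref{remarks:units-and-unit-stability}\ref{remarks:units-and-unit-stability(1)} then yields $X\subseteq H^\times$. Since $X$ is unit-stable and nonempty, $X\supseteq H^\times x$ for some $x\in X\subseteq H^\times$, and $H^\times x=H^\times$. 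Hence $X=H^\times$.
\end{itemize}
All the conditions are stated purely in terms of the semigroup structure and the units, so $f$ preserves them. Running the same argument in $K$ then gives $f(H^\times)=K^\times$.

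For \ref{prop:global-iso-maps-unit-group-to-unit-group(2)}, I need that $f$ maps $\mathcal P(H^\times)$ onto $\mathcal P(K^\times)$. The plan is to use the characterization that a nonempty $X\subseteq H$ lies in $\mathcal P(H^\times)$ if and only if $XH^\times=H^\times$, again by Remarks \ref{remarks:units-and-unit-stability}\ref{remarks:units-and-unit-stability(1)}. Applying $f$ and using part \ref{prop:global-iso-maps-unit-group-to-unit-group(1)}, this condition becomes $f(X)K^\times=K^\times$, which is equivalent to $f(X)\subseteq K^\times$. Thus $f$ restricts to a bijection $\mathcal P(H^\times)\to\mathcal P(K^\times)$. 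This restriction is multiplicative, since $\mathcal P(H^\times)$ is a subsemigroup of $\mathcal P(H)$.

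The main obstacle is part \ref{prop:global-iso-maps-unit-group-to-unit-group(1)}: finding a structural description of the set $H^\times$ that is visibly invariant under $f$. The minimality and absorption characterization above is what I would try first. The only delicate point is confirming that unit-stability relative to the units of $\mathcal P(H)$ agrees with the condition $H^\times Y=YH^\times=Y$, and Remarks \ref{remarks:units-and-unit-stability}\ref{remarks:units-and-unit-stability(2)} supplies exactly this.
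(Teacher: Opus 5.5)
Your proof is correct and takes essentially the paper's route: both rest on $H^\times$ being unit-stable in $\mathcal P(H)$ and acting as a two-sided identity on every unit-stable element, a property $f$ transports, and your part~(ii) is exactly the paper's argument via Remark~\ref{remarks:units-and-unit-stability}\ref{remarks:units-and-unit-stability(1)}. The only difference is that the paper closes part~(i) with the one-line ``two identities coincide'' computation $K^\times = f(H^\times)K^\times = f(H^\times)$, which uses that $f(H^\times)$ and $f^{-1}(K^\times)$ are unit-stable; this makes your elementwise uniqueness check unnecessary, and the idempotence condition $XX=X$ is never used.
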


\begin{proof}
\ref{prop:global-iso-maps-unit-group-to-unit-group(1)} Let $\Omega(M)$ be the set of unit-stable elements of the large power monoid $\mathcal P(M)$ of a monoid $M$. Since the inverse $f^{-1}$ of $f$ is a global isomorphism from $K$ to $H$, it is clear from Remark \ref{remarks:units-and-unit-stability}\ref{remarks:units-and-unit-stability(3)} that $f[\Omega(H)] = \Omega(K)$. On the other hand, $M^\times$ is a unit-stable element of $\mathcal P(M)$, as guaranteed by Remark \ref{remarks:units-and-unit-stability}\ref{remarks:units-and-unit-stability(2)} when noting that $M^\times M^\times = M^\times$.

As a result, $f(H^\times) \in \Omega(K)$, which, again by Remark \ref{remarks:units-and-unit-stability}\ref{remarks:units-and-unit-stability(2)}, implies $f(H^\times) K^\times = f(H^\times)$. Likewise, $H^\times X = X$, where $X := f^{-1}(K^\times) \in \Omega(H)$. Therefore, 
\[
K^\times = f(X) = f(H^\times) f(X) = f(H^\times) K^\times = f(H^\times).
\]

\ref{prop:global-iso-maps-unit-group-to-unit-group(2)} 
Let $X \in \mathcal P(H^\times)$. By Remark \ref{remarks:units-and-unit-stability}\ref{remarks:units-and-unit-stability(1)}, we have $H^\times = \allowbreak XH^\times$. It follows, by part \ref{prop:global-iso-maps-unit-group-to-unit-group(1)}, that
\[
K^\times = \allowbreak f(H^\times) = \allowbreak f(X) f(H^\times) = f(X) K^\times, 
\]
which, by the same Remark \ref{remarks:units-and-unit-stability}\ref{remarks:units-and-unit-stability(1)}, shows that $f(X) \subseteq \allowbreak K^\times$. This yields $f[\mathcal P(H^\times)] \subseteq \mathcal P(K^\times)$, and the same reasoning applied to $f^{-1}$ establishes the reverse inclusion.
\end{proof}

It remains an open question whether an analogue of Theorem~\ref{thm:global-iso-maps-unit-group-to-unit-group} holds for finitary power semigroups. More explicitly, if $H$ and $K$ are monoids and $f \colon \mathcal{P}_{\fin}(H) \to \mathcal{P}_{\fin}(K)$ is an isomorphism, must $f$ restrict to an isomorphism from $\mathcal{P}_{\fin}(H^\times)$ onto $\mathcal{P}_{\fin}(K^\times)$? 

The difficulty is that, when $H$ is infinite, the unit group $H^\times$ need not belong to $\mathcal P_{\fin}(H)$. As a result, the argument used below in the proof of Theorem~\ref{thm:groups-are-P-closed} does not carry over to the finitary
setting. In particular, it cannot be used in the proof of Theorem~\ref{thm:main}, which therefore requires a completely different
(and considerably more sophisticated) approach.

\begin{proof}[Proof of Theorem \ref{thm:groups-are-P-closed}]
Let $f$ be a global isomorphism from a group $H$ to a semigroup $K$. By \cite[Lemma 1.1]{Gou-Isk-Tsi-1984}, $K$ is a monoid. It follows from Theorem \ref{thm:global-iso-maps-unit-group-to-unit-group} that $f$ restricts to a global isomorphism from $H^\times$ to $K^\times$. 
But $H$ is a group, and hence $H=H^\times$. Therefore $f$ maps
$\mathcal P(H)$ onto $\mathcal P(K^\times)$. Since, by hypothesis,
$f$ maps $\mathcal P(H)$ onto $\mathcal P(K)$, we obtain $
\mathcal P(K)=\mathcal P(K^\times)$, which is only possible if $K=K^\times$. Thus $K$ is a group, and Shafer's half-page note \cite{Sha-1967} yields $H \cong K$.
\end{proof}

\section{Preliminaries}
\label{sec:preliminaries}

In this section, we collect a number of auxiliary facts that will be used in Section \ref{sect:4}. Most notably, we prove a pair of combinatorial results that are perhaps of independent interest: one relating a natural equation in a power semigroup to the Fibonacci sequence (Proposition \ref{prop:fib-count}), and another giving a uniform bound for certain fibers of setwise multiplication (Proposition \ref{prop:fiber-counting}).

We begin with a finitary analogue of a characterization~\cite[Lemma 1.1]{Gou-Isk-Tsi-1984} of when the large power semigroup of a semigroup has an identity element. Although the argument follows similar lines, we include the details here for the sake of completeness.

\begin{lemma}\label{lem:identity-reconstruction}
Let $H$ be a semigroup. If the finitary power semigroup $\Pfin(H)$ of $H$ has an identity element $E$, then $H$
is a monoid and $E$ is the singleton $\{1_H\}$.
\end{lemma}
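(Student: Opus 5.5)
The plan is to show first that $E$ is a singleton, and then that its unique element is a two-sided identity of $H$. Write $E$ for the identity of $\Pfin(H)$. For every $x \in H$ we have $\{x\}E = E\{x\} = \{x\}$, that is,
\[
xE = Ex = \{x\}.
\]
Hence $xe = ex = x$ for every $e \in E$ and every $x \in H$. In particular every element of $E$ is a two-sided identity of $H$.

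The key step is then the observation that a semigroup has at most one two-sided identity. Indeed, if $e, e' \in E$, then $e = ee' = e'$, using once that $e'$ is a right identity and once that $e$ is a left identity. Since $E$ is non-empty by definition of $\Pfin(H)$, it follows that $E = \{e\}$ for a single element $e$, which is an identity of $H$. So $H$ is a monoid with $1_H = e$ and $E = \{1_H\}$.

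There is essentially no obstacle here. The only point to handle with care is that one tests the identity property of $E$ against singletons $\{x\}$, which lie in $\Pfin(H)$. This is what makes the argument work in the finitary setting, exactly as in \cite[Lemma 1.1]{Gou-Isk-Tsi-1984}. Finiteness of $E$ itself is never used, apart from the fact that $E$ is non-empty.
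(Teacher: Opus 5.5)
Your proof is correct and is essentially the paper's argument. Both test $E$ against singletons to get $xE = Ex = \{x\}$. The paper only orders the steps differently: it first derives $uv = vu = u$ for $u, v \in E$ to see that $E$ is a singleton, and then reads off that its element is an identity, whereas you first show every element of $E$ is a two-sided identity and then invoke uniqueness of identities.
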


\begin{proof}
Let $E$ be the identity element of $\Pfin(H)$. If $u, v \in E$, then $uv \in uE = \{u\}$ and $vu \in Eu = \allowbreak \{u\}$, that is, $uv = vu = u$. By symmetry, it follows that $E$ is a singleton, say $E = \{e\}$. Hence,
\[
\{xe\} = xE = \{x\} = Ex = \{ex\},
\qquad\text{for all } x \in H.
\]
This shows that $H$ is a monoid with identity element $e$, which completes the proof since the identity element of a monoid is unique (and hence we have $e = 1_H$).
\end{proof}

The next result highlights a natural occurrence of the Fibonacci sequence in the study of power semigroups and will play a crucial role later in the proof of Theorem \ref{thm:main}.

\begin{proposition}
\label{prop:fib-count}
Let $H$ be a monoid, and suppose that $a \in H$ has infinite order.
Then, for every integer $n \ge 0$, the equation
\begin{equation}
\label{lem:fib-count:eq(1)}
\{1_H,a\} X = \{1_H,a\}^n
\end{equation}
has exactly $F_n$ solutions $X \in \mathcal P(H)$, where
$F_n$ is the $n$-th Fibonacci number (i.e., $F_0 := 0$, $F_1 := \allowbreak 1$, and $F_n := F_{n-1} + F_{n-2}$ for $n \ge 2$). Moreover, any such solution is finite and contains $1_H$.
\end{proposition}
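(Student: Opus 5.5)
The plan is to reduce the equation to a counting problem about subsets of a discrete interval, using the fact that $a$ has infinite order, and then to count those subsets by a Fibonacci-type recursion.

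\textbf{Case $n = 0$.} Here $\{1_H,a\}^0 = \{1_H\}$, the identity of $\mathcal P(H)$. If $\{1_H,a\}X = \{1_H\}$, then $x = 1_H x = 1_H$ for every $x \in X$, so $X = \{1_H\}$. This forces $a = a1_H = 1_H$, contradicting the assumption that $a$ has infinite order. Hence there are no solutions, matching $F_0 = 0$.

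\textbf{Reduction to subsets of $\mathbb N$ (for $n \ge 1$).} Since $a$ has infinite order, the powers $a^0 = 1_H, a, a^2, \ldots$ are pairwise distinct. So $k \mapsto a^k$ is an isomorphism from $(\mathbb N,+)$ onto the submonoid generated by $a$, and
\[
\{1_H,a\}^n = \{a^k : k \in \llb 0,n \rrb\}.
\]
Let $X$ be a solution. Since $1_H \in \{1_H,a\}$, we get $X = 1_H X \subseteq \{1_H,a\}X = \{1_H,a\}^n$. Hence $X = \{a^k : k \in S\}$ for a unique $S \subseteq \llb 0,n \rrb$; in particular $X$ is finite. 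Transporting along the isomorphism with $\mathbb N$, the equation \eqref{lem:fib-count:eq(1)} becomes
\[
S \cup (S+1) = \llb 0,n \rrb,
\]
and distinct sets $S$ correspond to distinct $X$. So it suffices to count the sets $S \subseteq \mathbb N$ satisfying this equation.

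\textbf{Characterizing the admissible $S$.} The equation holds if and only if the following three conditions hold.
\begin{enumerate}[label=\textup{(\alph*)}]
\item $S \subseteq \llb 0,n-1\rrb$. This is needed because $S+1 \subseteq \llb 0,n \rrb$, so $\max S \le n-1$.
\item $0 \in S$ and $n-1 \in S$. First, $0 \notin S+1$, so $0$ must lie in $S$. Second, by (a) we have $n \notin S$, so $n$ must lie in $S+1$, i.e.\ $n-1 \in S$.
\item No two consecutive integers of $\llb 0,n-1\rrb$ are both missing from $S$. Indeed, each $j \in \llb 1,n-1\rrb$ must satisfy $j \in S$ or $j-1 \in S$.
\end{enumerate}
Condition (b) gives $0 \in S$, that is, $1_H \in X$, which is the last claim of the statement.

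\textbf{Counting.} Let $G_n$ be the number of sets satisfying (a)--(c). By direct inspection $G_1 = 1$ (only $S=\{0\}$) and $G_2 = 1$ (only $S=\{0,1\}$). For $n \ge 3$, split according to whether $n-2 \in S$.
\begin{itemize}
\item If $n-2 \in S$, then $S \setminus \{n-1\}$ is admissible for $n-1$.
\item If $n-2 \notin S$, then $n-3 \in S$ by (c), and $S \setminus \{n-1\}$ is admissible for $n-2$.
\end{itemize}
In each case the map $S \mapsto S\setminus\{n-1\}$ is a bijection; its inverse adds back $n-1$. Hence $G_n = G_{n-1} + G_{n-2}$, and therefore $G_n = F_n$.

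\textbf{Main obstacle.} The only delicate point is the reduction step: showing that every solution lies inside $\langle a\rangle$, which follows from $X \subseteq \{1_H,a\}X$. Verifying the boundary conditions (a)--(c) exactly, so that the recursion starts correctly, also needs care. The rest is routine.
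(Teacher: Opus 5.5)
Your proof is correct and takes essentially the same route as the paper: you confine $X$ to $\{1_H,a\}^n$ via $X \subseteq \{1_H,a\}X$, transport the problem to $\mathbb N$, pin down $\{0,n-1\}\subseteq S\subseteq \llb 0,n-1\rrb$ with no two consecutive gaps, and get the Fibonacci recursion by conditioning on whether $n-2$ lies in the set. The differences are cosmetic: you treat $n=0$ directly in $H$, recurse on $S$ itself rather than on its complement in $\llb 1,n-2\rrb$, and start from $G_1=G_2=1$; the sufficiency of your conditions (a)--(c), which you assert but do not spell out, is immediate.
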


\begin{proof}
Fix $n \in \mathbb N$, and let $\mathcal S_n$ be the set of all $X \in \mathcal P(H)$ that solve Eq.~\eqref{lem:fib-count:eq(1)}. Put $s_n := |\mathcal S_n|$. 

Every solution $X$ to Eq.~\eqref{lem:fib-count:eq(1)} is contained in $\{1_H, a\}^n = \{1_H, a, \ldots, a^n\}$, as the left-hand side of the equation contains $X$. In addition,  $1_H \in \{1_H, a\} X$ forces $1_H \in X$, by the fact that $a$ is an element of infinite order in $H$ (and hence $a^k \ne 1_H$ for all $k \in \mathbb N^+$). 

Accordingly, any solution to Eq.~\eqref{lem:fib-count:eq(1)} belongs to $\mathcal P_\fin(H)$. Moreover, if $H_a$ denotes the cyclic submonoid of $H$ generated by $a$ and $\phi_a$ is the (monoid) isomorphism $H_a \to \mathbb N$ that maps an element $x \in H_a$ to the unique integer $k \ge 0$ such that $x = a^k$, then the augmentation 
\[
\Phi_a \colon \mathcal P(H_a) \to \mathcal P(\mathbb N) \colon X \mapsto \phi_a[X] 
%\{\phi_a(x): x \in X\}
\]
of $\phi_a$ restricts to a (monoid) isomorphism from $\mathcal P_{\fin}(H_a)$ onto $\mathcal P_{\fin}(\mathbb N)$ that maps $\{1_H, a\}$ to $\{0, 1\}$. Therefore, $\Phi_a$ provides a bijection between $\mathcal S_n$ and the solutions $Y \in \mathcal P_\fin(\mathbb N)$ of the equation
\begin{equation}
\label{lem:fib-count:eq(2)}
\{0,1\} + Y = \llb 0, n \rrb.
\end{equation}
In other words, $s_n$ counts the number of sets $Y \in \mathcal P_\fin(\mathbb N)$ for which Eq.~\eqref{lem:fib-count:eq(2)} holds. In particular, we have $s_0 = 0$, because the sumset $\{0, 1\} + Y$ contains at least two elements for every non-empty $Y \subseteq \mathbb N$, whereas $\llb 0, 0 \rrb$ is a singleton. Consequently, we will assume below that $n \ge 1$. 

Let $Y$ be a solution to Eq.~\eqref{lem:fib-count:eq(2)}. We know from the first lines of the proof (now with $\mathbb N$ in place of $H$) that $0 \in Y$ and $Y\subseteq \llb 0, n \rrb$. Thus, taking maxima on both sides of the equation gives
\[
\{0,n-1\} \subseteq Y \subseteq \llb 0,n-1 \rrb.
\]
For $n=1$, this implies $Y = \{0\}$, that is, $s_1 = 1$. So, we
will henceforth assume that $n \ge 2$.

Now, fix $k \in \llb 1, n-1 \rrb$. Clearly, $k \notin Y + \{0, 1\}$ if and only if neither $k$ nor $k-1$ belongs to $Y$. For Eq.~\eqref{lem:fib-count:eq(2)} to hold, it is thus necessary and sufficient that at least one of $k-1$ and $k$
is in $Y$. E\-quiv\-a\-lent\-ly, the
complement of $Y$ in $\llb 1,n-2 \rrb$
contains no two consecutive integers. 

Conversely, if a subset $Z$ of $\llb 1,n-2 \rrb$ has no two
consecutive integers, then its complement $Z^c$ in $\llb 0,n-1 \rrb$ is a solution to Eq.~\eqref{lem:fib-count:eq(2)}; in particular, we have $\{0,n-1\} \subseteq Z^c$.

Thus $s_n$ is equal to the number of subsets of $\llb 1, n-2 \rrb$
containing no two consecutive integers. We shall call such subsets
\evid{admissible} (note that the empty set is admissible). We claim that
\[
s_n = s_{n-1} + s_{n-2}.
\]
Indeed, if an admissible subset $Z$ of $\llb 1,n-2 \rrb$ does not contain $n-2$, then $Z$ is an admissible subset of $\llb 1, n-3 \rrb$, giving $s_{n-1}$ possibilities (this also covers the cases $n = 2$ and $n = 3$, since then $\llb 1, n-3 \rrb = \emptyset$ and $s_{n-1} = 1$). 
Otherwise, if $n-2 \in Z$, then necessarily $n \ge 3$ and $n-3 \notin Z$, and so $Z \setminus \{n-2\}$ is an admissible subset of $\llb 1, n-4 \rrb$, giving $s_{n-2}$ possibilities (this also covers
the cases $n=3$ and $n=4$, since then $\llb 1,n-4 \rrb=\emptyset$ and $s_{n-2}=1$).

As we have already shown that $s_0 = 0$ and $s_1 = 1$, it follows that the 
sequence $s_0, s_1, \dots$ satisfies the same recurrence and initial conditions 
as the Fibonacci sequence $F_0, F_1, \dots$, and therefore $s_n = F_n$ for all 
$n \in \mathbb{N}$.
\end{proof}

The following definition will be useful in the next two results and will
play an essential role later in Section \ref{sect:4} in the proof of Theorem~\ref{thm:main}.

\begin{definition}
\label{def:indexed-signed-bin-rep}
An \evid{indexed signed binary representation} is any sum of the form $\sum_{i=1}^k 2^{e_i} \varepsilon_i$,
where $k \in \mathbb N$,
$e_1, \ldots, e_k \in \mathbb N$, and $\varepsilon_1, \ldots, \varepsilon_k \in \{\pm 1\} \subseteq \mathbb Z$. If the sum adds up to an integer $z$, we say it is a representation of $z$. We call $k$ the
\evid{length} and the $e_i$'s the \evid{exponents} of the representation.
\end{definition}

Note that, in contrast with the usual binary representation of an integer, the exponents
of an indexed signed binary representation need not be pairwise distinct.

We begin with a lemma that is a special case of more general finiteness results on sums of $S$-units in recurrence sequences (see, for instance, \cite[Theorem~2.1]{ber-haj-pin-rou-2019}). For the reader's convenience, we include a proof tailored to the Fibonacci sequence.

\begin{lemma}
\label{lem:fib-binary-representations}
For each $k \in \mathbb N^+$, there are only finitely many $n \in \mathbb N^+$ such that the $n$-th Fibonacci number $F_n$ has an indexed signed binary representation of length at most $k$.
\end{lemma}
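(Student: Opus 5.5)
We argue by contradiction using Binet's formula and Theorem~\ref{thm:ess}. Let $\varphi := (1+\sqrt 5)/2$ and $\psi := (1-\sqrt 5)/2$, so that $F_n = (\varphi^n - \psi^n)/\sqrt 5$. Fix $k$, and suppose there are infinitely many $n$ with a representation $F_n = \sum_{i=1}^{j} 2^{e_i}\varepsilon_i$ of length $j \le k$. Passing to an infinite subsequence, we may assume that the length $j$ is constant. Since $F_n > 0$, we have $j \ge 1$. Rewriting, we get
\[
\varphi^n = \psi^n + \sqrt 5 \sum_{i=1}^{j} \varepsilon_i 2^{e_i},
\]
and dividing by $\varphi^n$ gives
\[
\Bigl(\frac{\psi}{\varphi}\Bigr)^n + \sum_{i=1}^{j} \varepsilon_i\sqrt 5\, 2^{e_i}\varphi^{-n} = 1 .
\]
This is an equation $x_0 + x_1 + \cdots + x_j = 1$ in the finitely generated subgroup $\Gamma \subseteq \mathbb C^\times$ generated by $-1$, $2$, $\sqrt 5$, $\varphi$ and $\psi$. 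Every $x_i$ is nonzero and lies in $\Gamma$.

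The key step is to handle degeneracy. We take each solution and choose a maximal vanishing-free partition. Concretely, we split the index set $\llb 0, j\rrb$ into a part $I$ containing a block whose sum is $1$ with no vanishing proper subsums, and further blocks that sum to $0$. More cleanly, we proceed by induction on $r$ in the standard way: any solution of $x_1+\dots+x_r=1$ contains a non-empty subset $I$ such that $\sum_{i\in I} x_i = 1$ and $(x_i)_{i\in I}$ is non-degenerate. To see this, we repeatedly delete a vanishing subsum; this terminates since sets shrink. We then pass to a further infinite subsequence on which this subset $I \subseteq \llb 0,j\rrb$ is the same for all $n$. Theorem~\ref{thm:ess} then gives only finitely many possible tuples $(x_i)_{i\in I}$. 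Hence, along an infinite subsequence, each $x_i$ with $i \in I$ is constant.

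Now we derive the contradiction. Since $|\psi/\varphi|<1$, the term $x_0=(\psi/\varphi)^n$ takes pairwise distinct values as $n$ varies. It therefore cannot be constant along an infinite subsequence, so $0 \notin I$. Take any $i \in I$ with $i \ge 1$. Then $x_i = \varepsilon_i\sqrt5\, 2^{e_i}\varphi^{-n}$ is constant along infinitely many $n$, so $2^{e_i} = c\,\varphi^{n}$ for a fixed constant $c$ and infinitely many $n$. Taking two such values $n < n'$, we get $\varphi^{n'-n} = 2^{e'-e}$, a rational number. This is impossible, since $\varphi^m$ is irrational for every $m \ge 1$: its conjugate is $\psi^m \ne \varphi^m$, as $|\psi|<1<\varphi$. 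This contradiction proves the lemma.

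The main obstacle is the degeneracy bookkeeping: making sure that some non-degenerate sub-block summing to $1$ always exists and can be fixed along a subsequence. The argument above settles this via the reduction lemma and pigeonholing over the finitely many subsets $I$.
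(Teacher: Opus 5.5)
Your proof is correct and follows essentially the same route as the paper: Binet's formula, normalization by the $\varphi^n$ term, and extraction of a non-degenerate sub-block, where your deletion of vanishing subsums plays the role of the paper's minimal zero-sum block containing the $\alpha^n/\sqrt5$ term. From there you use the Evertse--Schlickewei--Schmidt theorem with pigeonholing over subsequences, and rule out constancy of the $\psi$-term by its decay and of a $2^{e_i}$-term via the Galois conjugate. The differences are cosmetic: you close the last case through the irrationality of $\varphi^{n'-n}$, whereas the paper notes that $\sigma(a)/a = (-1)^{n-1}\alpha^{2n}$ would have to be constant yet is unbounded.
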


\begin{proof}
Fix $k \in \mathbb N^+$, and suppose for a contradiction that the set $\mathcal N$ of positive integers $n$ such that $F_n$ has an indexed signed binary representation of length at most $k$ is infinite. 
There are only finitely many possible lengths and, for each fixed
length, only finitely many possible choices of signs. Hence,
after replacing $\mathcal N$ by an infinite subset if necessary, we may
assume (without loss of generality) that both the length and the signs are fixed as $n$ ranges over $\mathcal N$. Namely, there exists $s \in \mathbb N^+$, along with signs $\varepsilon_1, \ldots, \varepsilon_s \in \{\pm 1\}$ and functions $e_1, \ldots, e_s \colon \mathcal N \to \mathbb N$, such that
\begin{equation}
\label{lem:fib-binary-representations:eq(1)}
F_n = \sum_{i = 1}^s 2^{e_i(n)} \varepsilon_i,
\qquad\text{for all }
n \in \mathcal N.
\end{equation}

Let $n \in \mathcal N$, and put $\alpha := \frac{1}{2}(1+\sqrt5)$ and $\beta := \frac{1}{2}(1-\sqrt5)$. By Binet's formula \cite[p.~296]{Bur-2010}, we have $
\sqrt{5}\, F_n = \alpha^n - \beta^n$.
Therefore, Eq.~\eqref{lem:fib-binary-representations:eq(1)} can be equivalently restated as 
\begin{equation}
\label{lem:fib-binary-representations:eq(2)}
T_0(n) + T_1(n) + \cdots + T_{s+1}(n) = 0,
\end{equation}
where we define
\begin{equation}
\label{lem:fib-binary-representations:eq(1a)}
T_0(n) := \frac{\alpha^n}{\sqrt{5}}
\quad\text{and}\quad
T_1(n) := - \frac{\beta^n}{\sqrt{5}}, 
\end{equation}
as well as  
\begin{equation}
\label{lem:fib-binary-representations:eq(1b)}
T_{i+1}(n) := -2^{e_i(n)} \varepsilon_i, 
\qquad\text{ for each } 
i \in \llb 1, s \rrb.
\end{equation}

By Eq.~\eqref{lem:fib-binary-representations:eq(2)}, the tuple $
(T_0(n), T_1(n), \ldots, T_{s+1}(n))$
can be identified with a zero-sum sequence over the additive group of $\mathbb C$. It then follows from the basics of zero-sum theory \cite[Section 3.4]{Ger-HK-2006} that there exists at least one ``block'' $B_n \subseteq \llb 0, s + \allowbreak 1 \rrb$ with $0 \in B_n$ such that the sum $\sum_{i \in B_n} T_i(n)$ is zero and any subsum $\sum_{i \in A} T_i(n)$ with $\emptyset \subsetneq A \subsetneq B_n$ is non-zero.
Moreover, since $s$ is fixed, there are only finitely many possible choices for the block $B_n$. So, passing (if necessary) to a further infinite subset of $\mathcal N$, we may assume that $B_n$ is independent of $n$. 

To sum it up, we have established that, without loss of generality, there exists a subset $B$ of $\llb 0, s + 1 \rrb$ containing $0$ with the ``minimality property'' that
\begin{equation}
\label{lem:fib-binary-representations:eq(3)}
\sum_{i \in B} T_i(n) = 0, 
\qquad\text{for all } n \in \mathcal N,
\end{equation}
and additionally,
\begin{equation}
\label{lem:fib-binary-representations:eq(4)}
\sum_{i \in A} T_i(n) \ne 0,
\qquad \text{for all }n \in \mathcal N \text{ and every non-empty set } A \subsetneq B.
\end{equation}

Denote by $E$ the quadratic extension of the rational field obtained by adjoining $\sqrt{5}$. Since
$\beta = \allowbreak -\alpha^{-1}$, each of the summands appearing in Eq.~\eqref{lem:fib-binary-representations:eq(3)} belongs to the subgroup $\Gamma$ of $E^\times$ (the unit group of $E$) generated by $-1$, $2$, $\alpha$, and $\sqrt 5$. 
Next, note that $r := |B| - 1$ is a positive integer (by the fact that $0 \in B$ and $T_0(n) \ne 0$ for all $n \in \mathbb N$), and let $i_1, \ldots, i_r$ be an enumeration of $B \setminus \{0\}$.
We gather from the above that, for any $n \in \mathcal N$, the $r$-tuple 
\[
t_n := (-T_{i_1}(n)/T_0(n), \ldots, -T_{i_r}(n)/T_0(n))
\]
provides a non-degenerate solution to the equation 
\begin{equation}
\label{lem:fib-binary-representations:eq(5)}
x_1 + \cdots + x_r = 1, \qquad
\text{with } x_1, \ldots, x_r \in \Gamma;
\end{equation}
in particular, non-degeneracy is guaranteed by Eq.~\eqref{lem:fib-binary-representations:eq(4)}. On the other hand, Eq.~\eqref{lem:fib-binary-representations:eq(5)} has only finitely many non-degenerate solutions by Theorem~\ref{thm:ess}. Passing once again to an infinite subset of $\mathcal N$ if necessary, we may therefore assume that the tuple $t_n$ (and hence each of its components) is independent of $n$. In particular, there exists $a \in \Gamma$ such that
\begin{equation}
\label{lem:fib-binary-representations:eq(6)}
a = \frac{T_i(n)}{T_0(n)} = \frac{\sqrt{5} \: T_i(n)}{\alpha^n} = (-1)^n \sqrt{5} \, \beta^n\, T_i(n),
\qquad
\text{for all } n \in \mathcal N,
\end{equation}
where, for convenience, $i$ is the minimum of $i_1, \ldots, i_r$. We will show that this is impossible.

Indeed, fix $n \in \mathcal N$. If $i = 1$, then we get from Eqs.~\eqref{lem:fib-binary-representations:eq(1a)} and \eqref{lem:fib-binary-representations:eq(6)} that $|a| = |\beta|^{2n}$ for infinitely many $n \in \mathbb N^+$, which is absurd because $|\beta| = -\beta < 1$ and hence $|\beta|^{2n} \to 0$ as $n \to \infty$. Therefore, we must have $2 \le i \le s+1$. Accordingly, we infer from Eqs.~\eqref{lem:fib-binary-representations:eq(1b)} and \eqref{lem:fib-binary-representations:eq(6)} that 
\begin{equation}
\label{lem:fib-binary-representations:eq(7)}
0 \ne a = -\frac{2^{e_{i - 1}(n)} \varepsilon_{i-1} \sqrt{5}}{\alpha^n}.
\end{equation}
Let
$\sigma$ be the only non-trivial automorphism of the field $E$, so that $\sigma(q) = q$ for any $q \in \mathbb Q$ and $\sigma(\sqrt{5}) = -\sqrt{5}$. Then $\sigma(\alpha) = \beta$, and applying $\sigma$ to Eq.~\eqref{lem:fib-binary-representations:eq(7)} yields
\[
0 \ne \frac{\sigma(a)}{a} = - \frac{\alpha^n}{\sigma(\alpha)^n} = -\frac{\alpha^n}{\beta^n} = (-1)^{n-1} \alpha^{2n} =: \gamma_n.
\]
But this is again a contradiction, because $\alpha > 1$ and hence $|\gamma_n| \to \infty$ as $n \to \infty$.

Thus, the only possible conclusion is that there are only finitely many terms in the Fibonacci sequence with an indexed signed binary representation of bounded length.
\end{proof}

We continue with a combinatorial result that allows us to bound the number~$s$ 
of solutions $Y$ to an equation of the form $AY = B$ over a finitary 
power semigroup in terms of the length of an indexed signed binary 
representation of $s$, assuming only that $s$ is finite.

\begin{proposition}
\label{prop:fiber-counting}
Let $A$ and $B$ be non-empty finite subsets of a semigroup $H$, and let
$\mathcal S$ be the set of all $Y \in \mathcal P_{\fin}(H)$ such that
$AY=B$. If $\mathcal S$ is finite, then $|\mathcal S|$ has an indexed
signed binary representation of length at most $2^{|B|}$.
\end{proposition}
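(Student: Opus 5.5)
Every solution $Y$ of $AY = B$ satisfies $aY \subseteq B$ for each $a \in A$, but that alone does not confine $Y$ to a finite set, since left multiplication by $a$ need not be injective. So I would not try to bound $Y$ elementwise. Instead I would count solutions by inclusion--exclusion over the subsets of $B$. The idea is that the solutions are the sets $Y$ with $AY \subseteq B$ and $AY \not\subseteq B \setminus \{b\}$ for every $b \in B$.

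For each $C \subseteq B$, let $U_C := \{y \in H : Ay \subseteq C\}$, and let $\mathcal T_C$ be the family of non-empty finite subsets of $U_C$. Then $AY \subseteq C$ if and only if $Y \in \mathcal T_C$. By inclusion--exclusion,
\[
\mathcal S = \mathcal T_B \setminus \bigcup_{b \in B} \mathcal T_{B\setminus\{b\}},
\qquad
|\mathcal S| = \sum_{C \subseteq B} (-1)^{|B \setminus C|}\, |\mathcal T_C|,
\]
using $\mathcal T_C \cap \mathcal T_{C'} = \mathcal T_{C \cap C'}$. This identity makes sense provided every $|\mathcal T_C|$ is finite. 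Each $|\mathcal T_C|$ equals $2^{|U_C|} - 1$, so the sum is a signed combination of at most $2 \cdot 2^{|B|}$ powers of two. That is too long by a factor of two, so the $-1$ terms must be handled.

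The $-1$ terms cancel. Their total contribution is $-\sum_{C \subseteq B} (-1)^{|B\setminus C|}$, and this vanishes because $B \neq \emptyset$. Hence
\[
|\mathcal S| = \sum_{C \subseteq B} (-1)^{|B\setminus C|}\, 2^{|U_C|},
\]
which is an indexed signed binary representation of length exactly $2^{|B|}$, with exponents $|U_C| \in \mathbb N$. The term $C = \emptyset$ is harmless: $U_\emptyset = \emptyset$ because $A$ is non-empty, so that term is $\pm 2^0$.

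The main obstacle is finiteness. I must show that if $\mathcal S$ is finite then every $U_C$ with $C \subseteq B$ is finite. Since $U_C \subseteq U_B$, it suffices to show $U_B$ is finite. I expect to argue the contrapositive: assuming $U_B$ infinite, I will produce infinitely many solutions. The key observation is that adding elements of $U_B$ to a solution keeps it a solution, because $A(Y \cup Z) = AY \cup AZ \subseteq B$ and $AY = B$ already. So I first need one solution $Y_0$. If $\mathcal S = \emptyset$, then $|\mathcal S| = 0$ is the empty representation and there is nothing to prove. Otherwise, fix $Y_0 \in \mathcal S$; then the sets $Y_0 \cup \{u\}$ with $u \in U_B$ are all solutions, and infinitely many of them are distinct when $U_B$ is infinite, contradicting finiteness of $\mathcal S$. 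Hence, whenever $\mathcal S \neq \emptyset$ is finite, all $U_C$ are finite and the identity above holds, which completes the plan.
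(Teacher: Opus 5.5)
Your proof is correct and follows the paper's argument. Your $U_C$ is exactly the paper's $\Omega(B\setminus C)$, finiteness of $U_B=\Omega$ comes from the same observation that adjoining any $u\in U_B$ to a solution gives another solution, and the inclusion--exclusion is the same. The only difference is cosmetic: the paper sieves inside the full power set $2^{\Omega}$, where the empty set lies in every $E_b$ and drops out automatically, whereas you sieve over non-empty subsets and then cancel the $-1$ terms using $\sum_{C\subseteq B}(-1)^{|B\setminus C|}=0$.
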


\begin{proof}
There is nothing to prove if $\mathcal S=\emptyset$, since in this case
$|\mathcal S|=0$ has an indexed signed binary representation of length
zero. Thus, assume that $\mathcal S$ is non-empty, and put
\[
\Omega := \{y \in H: Ay \subseteq B\}.
\]
Accordingly, for every $b \in B$ define
\begin{equation}
\label{prop:fiber-counting:eq(1)}
\Omega_b := \{y \in \Omega: b \in Ay\} 
\quad\text{and}\quad
E_b := \{Y \subseteq \Omega: Y \cap \Omega_b = \emptyset\}.
\end{equation}

If $Y \in \mathcal S$, then $AY = B$ and hence $Ay \subseteq B$ for each $y \in Y$, which shows that $Y \subseteq \Omega$. Moreover, if $Y \in \mathcal S$ and $y \in \Omega$, then $Ay \subseteq B = AY$ and therefore
\[
A(Y \cup \{y\}) = AY \cup Ay = B,
\]
which implies $Y \cup \{y\} \in \mathcal S$. It follows that 
\[
\Omega \subseteq \bigcup_{Y \in \mathcal S} Y, 
\]
and hence $|\Omega| < \infty$, because $\mathcal S$ is finite and each set in $\mathcal S$ is finite too (by hypothesis).

Now, let $Y \subseteq \Omega$. Since $AY \subseteq B$, we have
$Y \notin \mathcal S$ if and only if $AY \neq B$. That is, $Y \notin \mathcal S$ if and only if
there exists $b \in B$ such that $b \notin AY$, which, by our definitions, is equivalent to $Y \cap \Omega_b = \allowbreak \emptyset$ for some $b \in B$. As we have already noticed that $\mathcal S \subseteq 2^\Omega$, it follows that
\[
\mathcal S = 2^{\Omega} \setminus \bigcup_{b \in B} E_b.
\]
By the inclusion-exclusion principle, this results in
\begin{equation}
\label{prop:fiber-counting:eq(2)}
|\mathcal S| = \sum_{D\subseteq B}
(-1)^{|D|} \left|\bigcap_{b \in D} E_b \right|.
\end{equation}
If, on the other hand, $D$ is a subset of $B$ and $\Omega(D)$ is the complement of $\bigcup_{b \in D} \Omega_b$ in $\Omega$, then
\[
\bigcap_{b \in D} E_b 
= \left\{Y \subseteq \Omega: Y \cap \bigcup_{b \in D} \Omega_b = \emptyset\right\} = \left\{Y: Y \subseteq \Omega \setminus \bigcup_{b \in D} \Omega_b\right\} = 2^{\Omega(D)}.
\]

Consequently, we conclude from Eq.~\eqref{prop:fiber-counting:eq(2)} that
\[
|\mathcal S| = \sum_{D\subseteq B}
(-1)^{|D|} \left|2^{\Omega(D)}\right| = \sum_{D\subseteq B}
(-1)^{|D|} \, 2^{|\Omega(D)|},
\]
which is an indexed signed binary representation of $|\mathcal S|$ of
length at most $2^{|B|}$.
\end{proof}

We conclude the section with a lemma on intervals in totally ordered groups that will be used later in the proof of Lemma~\ref{lem:bounding-the-size-of-powers}. Although we will only apply the lemma to additive subgroups of $\mathbb Q$, the proof works in a somewhat more general setting, so
we record the result in that form.

Here and throughout, an \evid{ordered semigroup} is a pair $(H,\preceq)$ consisting of a semigroup $H$ and an order $\preceq$ on (the underlying set of) $H$ such that if $x \preceq y$ then $ux \preceq uy$ and $xu \preceq yu$ for all $u \in H$.
We say in this case that the order is \evid{translation-invariant}. If,
in addition, $\preceq$ is \evid{total} (that is, either $x \preceq y$ or $y \preceq x$ for all $x, y \in H$), we call $(H,\preceq)$ a \evid{totally ordered semigroup}.

\begin{remark}
\label{rem:minimizer}
If $\mathcal H = (H, \preceq)$ is a totally ordered semigroup, then for every non-empty finite $X \subseteq H$ there exist \textit{unique} elements $x_\ast, x^\ast \in X$ such that $x_\ast \preceq y \preceq x^\ast$ for all $y \in X$; we refer to $x_\ast$ as the \evid{$\preceq$-minimum} and to $x^\ast$ as the \evid{$\preceq$-maximum} of $X$. 
Accordingly, we have a well-defined function $\mu \colon \mathcal P_\fin(H) \to H$ that maps a set $X \in \mathcal P_\fin(H)$ to its $\preceq$-minimum. We call $\mu$ the \evid{minimizer} of $\mathcal H$.

Let $X, Y \in \mathcal P_\fin(H)$. If $x_\ast := \mu(X)$ and $y_\ast := \mu(Y)$, then $x_\ast \preceq x$ and $y_\ast \preceq y$ for all $x \in X$ and $y \in Y$, which, by translation-invariance, yields $x_\ast y_\ast \preceq xy$. It follows
that 
\[
\mu(XY) = x_\ast y_\ast = \mu(X) \mu(Y),
\]
that is, $\mu$ is a (semigroup) homomorphism $\mathcal P_\fin(H) \to H$. Note that $\mu$ is also surjective, because $\{x\} \in \mathcal P_\fin(H)$ and $\mu(\{x\}) = x$ for every $x \in H$. 
\end{remark}

A [totally] ordered semigroup $\mathcal H = (H, \preceq)$ is a [totally] ordered \textit{group} if $H$ is a group; and it is commutative/abelian if so is the semigroup $H$. For instance, every (additive) sub[semi]group of $\mathbb Q$ is a totally ordered commutative [semi]group under the usual ordering of the rationals.

\begin{lemma}
\label{lem:intervals}
Let $\mathcal H = (H, \preceq)$ be a totally ordered group. If $A$ is a non-empty subset of $H$ with minimum element $1_H$ and maximum element $a$ with respect to the order $\preceq$, then 
\[
A \cdot [1_H, b\,]_\mathcal{H} = [1_H, ab]_\mathcal{H},
\qquad\text{for all }
b \in H \text{ with } a \preceq b.
\]
\end{lemma}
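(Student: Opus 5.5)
We prove the two inclusions separately, where $[x, y]_\mathcal{H}$ denotes $\{z \in H : x \preceq z \preceq y\}$. Fix $b \in H$ with $a \preceq b$, and put $I := [1_H, b]_\mathcal{H}$.

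\emph{Forward inclusion $A \cdot I \subseteq [1_H, ab]_\mathcal{H}$.} Take $x \in A$ and $y \in I$, so that $1_H \preceq x \preceq a$ and $1_H \preceq y \preceq b$. By translation-invariance, $1_H = 1_H 1_H \preceq x 1_H \preceq xy$ and $xy \preceq ay \preceq ab$. Hence $xy \in [1_H, ab]_\mathcal{H}$. This direction is routine.

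\emph{Reverse inclusion $[1_H, ab]_\mathcal{H} \subseteq A \cdot I$.} This is the substantive step, and the plan is to use only the two elements $1_H$ and $a$ of $A$. Let $z \in H$ with $1_H \preceq z \preceq ab$. Since $\preceq$ is total, we split into two cases.
\begin{enumerate*}[label=\textup{(\alph*)}]
\item If $z \preceq b$, then $z \in I$, and $z = 1_H z \in A \cdot I$ because $1_H \in A$.
\item If $b \prec z$, set $y := a^{-1} z$, so that $z = ay$ with $a \in A$. It remains to show $y \in I$. Multiplying $z \preceq ab$ on the left by $a^{-1}$ gives $y \preceq b$. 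For the lower bound, $a \preceq b \prec z$ gives $a \preceq z$, and multiplying on the left by $a^{-1}$ gives $1_H \preceq a^{-1} z = y$. So $y \in I$ and $z = ay \in A \cdot I$.
\end{enumerate*}

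The only point needing care is that multiplication by $a^{-1}$ preserves $\preceq$. In general we cannot assume commutativity, so we must always multiply on the same side. Here left multiplication by the group element $a^{-1}$ is order-preserving by the definition of an ordered semigroup, because $H$ is a group and so $a^{-1} \in H$. No commutativity is needed, since $b$ and $ab$ always appear with $a$ on the left.
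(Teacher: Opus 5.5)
Your proof is correct and follows essentially the same route as the paper: the same forward inclusion via translation-invariance, and the same case split on $z \preceq b$ versus $b \prec z$, writing $z = a(a^{-1}z)$ in the second case. The only cosmetic difference is that you obtain the lower bound $1_H \preceq a^{-1}z$ from $a \preceq z$, whereas the paper chains $1_H \preceq a^{-1}b \prec a^{-1}z$.
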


\begin{proof}
Fix $b \in H$ with $a \preceq b$. We will show that $A \cdot [1_H, b\,]_\mathcal{H} \subseteq [1_H, ab]_\mathcal{H} \subseteq A \cdot [1_H, b\,]_\mathcal{H}$.

To start with, if $x \in A$ and $y \in [1_H, b]_\mathcal{H}$, then $1_H \preceq x \preceq a$ and $1_H \preceq y \preceq b$. By the
translation-invariance of the order, this implies $1_H \preceq xy \preceq ab$, and hence $xy \in [1_H, ab]_\mathcal{H}$. In other words, $A \cdot [1_H, b]_\mathcal{H} \subseteq [1_H, ab]_\mathcal{H}$ (in fact, the conclusion holds regardless of whether $a \preceq b$). 

Conversely, let $z \in [1_H, ab]_\mathcal{H}$. We need to verify that $z \in A \cdot [1_H, b]_\mathcal{H}$. 
If $z \preceq b$, then 
$1_H \in A$ yields $z = 1_H z \in A \cdot [1_H,b]_\mathcal{H}$, and we are done.
Otherwise, $b \prec z \preceq ab$. Since $a \preceq b$, we have $1_H \preceq \allowbreak a^{-1}b$, and therefore $1_H \preceq a^{-1}b \prec a^{-1}z \preceq b$.
Thus $a^{-1}z \in [1_H,b]_\mathcal{H}$, which, together with $a \in A$, gives $z = a(a^{-1}z) \in A \cdot [1_H,b]_\mathcal{H}$ and completes the proof.
\end{proof}
% -----------------------------------------------------------------------------
\section{The finitary setting}
\label{sect:4}

This section is devoted to the proof of Theorem \ref{thm:main}. Before proceeding, we record a couple of elementary properties of power semigroups that will be used later on.

\begin{remark}
\label{rem:commutativity-and-unity-are-preserved}
Let $H$ and $K$ be semigroups, and let $f \colon \mathcal{P}_{\fin}(H) \to 
\mathcal{P}_{\fin}(K)$ be an isomorphism.

It is routine to check that a 
semigroup is commutative if and only if its large power semigroup is. 
Since every subsemigroup of a commutative semigroup is itself commutative and commutativity is preserved under isomorphisms, it follows that $H$ is commutative if and only if $K$ is.

Suppose, on the other hand, that $H$ is a monoid. Then $\mathcal P_\fin(H)$ 
itself is a monoid, its identity element being the singleton $\{1_H\}$. 
It follows that $\mathcal P_\fin(K)$ is also a monoid, its identity 
being the image of $\{1_H\}$ under $f$. By Lemma~\ref{lem:identity-reconstruction}, this is only possible if $K$ is a monoid too.
\end{remark}

The next lemma shows that any finitary power-semigroup isomorphism restricts to an isomorphism between the respective unit groups, acting via singletons.

\begin{lemma}
\label{lem:unit-reconstruction}
Let $H$ and $K$ be monoids, and let $f$ be a (semigroup) isomorphism from $\mathcal P_\fin(H)$ to $\mathcal P_\fin(K)$. There exists an isomorphism $\eta: H^\times \to K^\times$ such that $f(\{u\}) = \{\eta(u)\}$ for all $u \in H^\times$.
\end{lemma}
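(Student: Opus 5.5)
The plan is to identify the units of $\mathcal P_\fin(M)$ for a monoid $M$, and then use the fact that semigroup isomorphisms between monoids preserve identities and unit groups.

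\textbf{Step 1: units of $\mathcal P_\fin(M)$.} Let $M$ be a monoid. The units of $\mathcal P_\fin(M)$ are exactly the singletons $\{u\}$ with $u \in M^\times$; this is the finitary analogue of Remark~\ref{remarks:units-and-unit-stability}\ref{remarks:units-and-unit-stability(2)}, and the same argument applies.
\begin{itemize}
\item By Remark~\ref{rem:commutativity-and-unity-are-preserved}, $\mathcal P_\fin(M)$ is a monoid with identity $\{1_M\}$.
\item Suppose $U, V \in \mathcal P_\fin(M)$ satisfy $UV = VU = \{1_M\}$. Then $uv = vu = 1_M$ for all $u \in U$ and $v \in V$, so $U, V \subseteq M^\times$.
\item Fix $v \in V$. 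Right multiplication by $v$ is injective on $M^\times$, so $|U| = |Uv| \le |UV| = 1$. Hence $U = \{u\}$ with $u \in M^\times$.
\item Conversely, $\{u\}\{u^{-1}\} = \{u^{-1}\}\{u\} = \{1_M\}$ for every $u \in M^\times$.
\end{itemize}
Thus $u \mapsto \{u\}$ is a group isomorphism from $M^\times$ onto $\mathcal P_\fin(M)^\times$. It is bijective, and it is a homomorphism because $\{u\}\{v\} = \{uv\}$.

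\textbf{Step 2: $f$ maps units to units.} Since $H$ and $K$ are monoids, both $\mathcal P_\fin(H)$ and $\mathcal P_\fin(K)$ are monoids. By Remark~\ref{remarks:units-and-unit-stability}\ref{remarks:units-and-unit-stability(3)}, $f$ sends $\{1_H\}$ to $\{1_K\}$. It therefore restricts to a group isomorphism
\[
\mathcal P_\fin(H)^\times \to \mathcal P_\fin(K)^\times.
\]
This is elementary: if $XY = YX = \{1_H\}$, then $f(X)f(Y) = f(Y)f(X) = f(\{1_H\}) = \{1_K\}$, and the same holds for $f^{-1}$.

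\textbf{Step 3: define $\eta$.} By Step 1, for each $u \in H^\times$ the image $f(\{u\})$ is a singleton $\{\eta(u)\}$ with $\eta(u) \in K^\times$. So $\eta$ is the composition
\[
H^\times \to \mathcal P_\fin(H)^\times \to \mathcal P_\fin(K)^\times \to K^\times,
\]
where the outer maps are the isomorphisms of Step 1 (the last one inverted) and the middle map is the restriction of $f$ from Step 2. As a composition of isomorphisms, $\eta$ is an isomorphism, and $f(\{u\}) = \{\eta(u)\}$ holds by construction.

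\textbf{Main obstacle.} The only delicate point is that $f$ is assumed to be merely a semigroup isomorphism, so one must check that it preserves identities. This is exactly Remark~\ref{remarks:units-and-unit-stability}\ref{remarks:units-and-unit-stability(3)}: the identity is the unique element $e$ with $ex = xe = x$ for all $x$, and this property transfers along any bijective homomorphism.
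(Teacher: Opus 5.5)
Your proof is correct and is essentially the paper's argument. You identify the units of $\mathcal P_\fin(M)$ as the singletons $\{u\}$ with $u \in M^\times$, use that $f$ preserves the identity and so restricts to an isomorphism of unit groups, and read off $\eta$. The only difference is that you prove the unit characterization directly, via the finitary version of the argument in Remark~\ref{remarks:units-and-unit-stability}\ref{remarks:units-and-unit-stability(2)}, instead of citing it from the survey, which makes your proof self-contained.
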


\begin{proof}
By \cite[Theorem~4.2(i)]{Tri-Survey}, the units of
$\mathcal P_\fin(H)$ are precisely the singletons $\{u\}$ such that
$u$ is a unit of $H$, and similarly for the units of
$\mathcal P_\fin(K)$. Moreover, we have from
Remark~\ref{remarks:units-and-unit-stability}\ref{remarks:units-and-unit-stability(3)} that $f$ maps units of
$\mathcal P_\fin(H)$ bijectively onto units of
$\mathcal P_\fin(K)$. It follows that there exists a bijection $\eta \colon H^\times \to \allowbreak K^\times$ such that $f(\{u\}) = \{\eta(u)\}$ for every $u \in H^\times$. 
Since $f(\{xy\}) = f(\{x\}) f(\{y\})$ for all $x, y \in H$, it is then clear that $\eta$ is an isomorphism from $H^\times$ to $K^\times$.
\end{proof}

We are now ready to state and prove the main (technical) results for this section.

\begin{proposition}
\label{prop:structural-splitting}
Let $\mathcal H = (H, \preceq)$ be a totally ordered abelian group, $K$ be a semigroup, and $f$ be an isomorphism from $\mathcal P_\fin(H)$ to $\mathcal P_\fin(K)$. The following hold: 
\begin{enumerate}[label=\textup{(\roman{*})}]
\item\label{prop:structural-splitting(i)}
$K$ is a commutative monoid, and there exists a submonoid $L$ of $K$ such that 
\begin{equation}
\label{prop:structural-splitting:eq(0)}
L^\times = \{1_K\}
\quad\text{and}\quad
K \cong L \times K^\times \cong L \times H.
\end{equation}
\item\label{prop:structural-splitting(ii)} $\mu \circ f^{-1}(\{y\}) = 1_H$ for every $y \in L$, where $\mu$ is the minimizer of $\mathcal H$.
\end{enumerate}
\end{proposition}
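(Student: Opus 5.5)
The plan is to build a retraction $K \to H$ out of the minimizer and to take $L$ to be its kernel. First, by Remark~\ref{rem:commutativity-and-unity-are-preserved}, $K$ is a commutative monoid, since $H$ is an abelian group and in particular a commutative monoid. Next, Lemma~\ref{lem:unit-reconstruction} gives an isomorphism $\eta\colon H = H^\times \to K^\times$ with $f(\{u\}) = \{\eta(u)\}$ for all $u \in H$. This already yields $K^\times \cong H$, so the second isomorphism in Eq.~\eqref{prop:structural-splitting:eq(0)} follows from the first.

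The key construction is the map
\[
\varphi\colon K \to H,\qquad y \mapsto \mu\bigl(f^{-1}(\{y\})\bigr),
\]
where $\mu$ is the minimizer of $\mathcal H$ from Remark~\ref{rem:minimizer}. Since $\{y\}\{z\} = \{yz\}$ and $f^{-1}$ is a homomorphism, we get $f^{-1}(\{yz\}) = f^{-1}(\{y\})\,f^{-1}(\{z\})$. Because $\mu$ is a homomorphism, it follows that $\varphi(yz) = \varphi(y)\varphi(z)$. Moreover, for every $u \in H$ we have $f^{-1}(\{\eta(u)\}) = \{u\}$, hence $\varphi(\eta(u)) = u$. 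Thus $\varphi \circ \eta = \mathrm{id}_H$; in particular $\varphi(1_K) = 1_H$, and $\varphi$ is a surjective monoid homomorphism with section $\eta$. Define
\[
L := \{y \in K : \varphi(y) = 1_H\}.
\]
This is a submonoid of $K$, and part~\ref{prop:structural-splitting(ii)} holds by construction.

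To obtain the splitting, consider
\[
\psi\colon L \times K^\times \to K,\qquad (l,u) \mapsto lu.
\]
This is a homomorphism because $K$ is commutative. Its inverse is
\[
y \mapsto \bigl(y\,\eta(\varphi(y))^{-1},\ \eta(\varphi(y))\bigr).
\]
The first coordinate lies in $L$, since $\varphi\bigl(y\,\eta(\varphi(y))^{-1}\bigr) = \varphi(y)\,\varphi(y)^{-1} = 1_H$. The two composites are the identity: for $l \in L$ and $u \in K^\times$ we have $\varphi(lu) = \varphi(l)\,\eta^{-1}(u) = \eta^{-1}(u)$, because $\varphi$ agrees with $\eta^{-1}$ on $K^\times$.

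Finally, $L^\times \subseteq K^\times \cap L$, and for $u \in K^\times$ we have $\varphi(u) = \eta^{-1}(u)$. This equals $1_H$ only when $u = 1_K$, so $L^\times = \{1_K\}$. I do not expect a genuine obstacle here. The only delicate point is noticing that $\varphi$ restricted to $K^\times$ is exactly $\eta^{-1}$, which relies on Lemma~\ref{lem:unit-reconstruction} and on $H$ being a group, so that every singleton $\{u\}$ is a unit of $\mathcal P_\fin(H)$.
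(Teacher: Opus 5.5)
Your proposal is correct and is essentially the paper's argument. The paper works with $\lambda := \eta \circ \mu \circ f^{-1}(\{\cdot\}) \colon K \to K^\times$, which is your $\eta \circ \varphi$, and sets $L := \lambda^{-1}(1_K)$, the same set as your $\varphi^{-1}(1_H)$ because $\eta$ is injective. It then checks that $y \mapsto (y\lambda(y)^{-1}, \lambda(y))$ is an isomorphism $K \to L \times K^\times$, which is exactly the inverse of your multiplication map $\psi$.
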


\begin{proof}
We have from Remark \ref{rem:commutativity-and-unity-are-preserved} that $K$ is a commutative monoid, and from Remark \ref{rem:minimizer} that $\mu$ is a homomorphism from $\mathcal P_\fin(H)$ to $H$. In particular, $H$ being a group and $K$ being 
a monoid ensures by Lemma~\ref{lem:unit-reconstruction} that there is an isomorphism $\eta \colon H \to K^\times$ such that 
\begin{equation}
\label{prop:structural-splitting:eq(1)}
f(\{u\}) = \{\eta(u)\},
\qquad\text{for each }
u \in H.
\end{equation}

We thus obtain a well-defined function
\begin{equation}
\label{prop:structural-splitting:eq(1b)}
\lambda \colon K \to K^\times \colon y \mapsto \eta \circ \mu\circ f^{-1}(\{y\}).
\end{equation}
Notice that $\lambda$ fixes the units of $K$. Indeed, if $v \in K^\times$, then $v = \eta(u)$ for some $u \in H$, and hence
\[
\lambda(v) \stackrel{\eqref{prop:structural-splitting:eq(1)}}{=} \eta \circ \mu(\{u\}) = \eta(u) = v.
\]
In particular, $\lambda(1_K) = 1_K$. On the other hand, $\lambda$ is a 
homomorphism from $K$ to $K^\times$, as it is a composition of homomorphisms. 
Consequently, 
\begin{equation}
\label{prop:structural-splitting:eq(2)}
L := \lambda^{-1}(1_K) = \{y \in K: \lambda(y) = 1_K\}
\end{equation}
is a submonoid of $K$. Moreover, the unit group of $L$ is trivial: if $v \in L^\times$, then $v \in K^\times$ and thus $1_K = \lambda(v) = v$ (recall that $\lambda$ acts as the identity on $K^\times$). We claim that 
\[
K \cong L \times K^\times
\]
Since $H \cong \allowbreak K^\times$ (via $\eta$), this will complete the proof, as the injectivity of $\eta$ together with Eqs.~\eqref{prop:structural-splitting:eq(1b)} and \eqref{prop:structural-splitting:eq(2)} forces $\mu \circ f^{-1}(\{y\}) = 1_H$ for every $y \in L$.

For the claim, let $y \in K$ and define $\lambda_y := \lambda(y)$. Since $\lambda_y \in K^\times$, we may consider the element $y\lambda_y^{-1} \in K$. Then, $\lambda$ being a homomorphism $K \to K^\times$ that fixes $K^\times$ pointwise, we have
\[
\lambda(y \lambda_y^{-1}) = \lambda(y) \lambda(\lambda_y^{-1}) = \lambda_y \lambda_y^{-1} = 1_K.
\]
Hence, we can define a function $\phi \colon K \to L \times K^\times$ by 
\[
\phi(y) := (y \lambda_y^{-1}, \lambda_y),
\qquad \text{for each } y \in K.
\]

We claim that $\phi$ is a semigroup isomorphism. Indeed, the commutativity of $K$ entails that
\[
y\lambda_y^{-1} \cdot z \lambda_z^{-1} = yz \cdot \lambda_y^{-1} \lambda_z^{-1} = yz \cdot (\lambda_{yz})^{-1},
\qquad\text{for all } y, z \in K.
\]
It is then evident that $\phi$ is a homomorphism, and we are left to check that it is also bijective.

Suppose first that $\phi(y) = \phi(z)$ for some $y, z \in K$. Then $(y\lambda_y^{-1}, \lambda_y) = 
(z\lambda_z^{-1}, \lambda_z)$, and hence $\lambda_y = \allowbreak \lambda_z$ and $y\lambda_y^{-1} = z\lambda_z^{-1}$, which is only possible if $y = z$. So, $\phi$ is injective.

For surjectivity, let $(y, v) \in L \times K^\times$, and put $x := yv \in K$. By Eq.~\eqref{prop:structural-splitting:eq(2)} and the properties of $\lambda$, 
we have $\lambda(y) = 1_K$ and $\lambda(v) = v$. It follows that $\lambda_x = \lambda(yv) = \lambda(y)\lambda(v) = v$, and hence
\[
\phi(x) = (x \lambda_x^{-1}, \lambda_x) = (yvv^{-1}, v) = (y, v).
\]
This proves that $\phi$ is onto (and therefore an isomorphism).
\end{proof}

Incidentally, we remark that the ``structural splitting'' described by Eq.~\eqref{prop:structural-splitting:eq(0)} need not hold for arbitrary commutative monoids. For instance, let $K := \{e,u,x\}$ be a three-element set and define a binary operation on $K$ by the following Cayley table:
\[
\begin{array}{c|ccc}
\cdot & e & u & x \\
\hline
e & e & u & x \\
u & u & e & x \\
x & x & x & x
\end{array}
\]
It is routine to verify that $K$ is a commutative monoid with identity element $e$ and unit group $K^\times = \{e, u\}$. If $K$ were isomorphic to $L \times K^\times$ for some monoid $L$, then 
\[
3 = |K| = |L| \cdot |K^\times| = 2\,|L|,
\]
which is impossible. This shows that, in general, the unit group of a commutative monoid does not split off as a direct factor, a property that is, however, essential to our proof of Theorem~\ref{thm:main}.

\begin{lemma}
\label{lem:bounding-the-size-of-powers}
Let $H$ be an additive subgroup of the rational numbers, and let $f$ be an isomorphism from $\mathcal P_\fin(H)$ to the finitary power semigroup $\mathcal P_\fin(K)$ of a semigroup $K$. Then either $K$ is a group, or there exist a positive element $q \in H$ and integer $N \ge 1$ such that the size of the set $f(\{0, q\})^n$ is bounded by $N$ for infinitely many $n \in \mathbb N^+$.
\end{lemma}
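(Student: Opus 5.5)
The plan is to exploit the splitting of Proposition~\ref{prop:structural-splitting}. Assume $K$ is not a group. By Proposition~\ref{prop:structural-splitting}\ref{prop:structural-splitting(i)}, $K \cong L \times H$ with $L^\times = \{1_K\}$. Hence $L$ is non-trivial, since otherwise $K \cong H$ would be a group. Pick $y \in L$ with $y \ne 1_K$ and put $X := f^{-1}(\{y\}) \in \mathcal P_\fin(H)$. By part \ref{prop:structural-splitting(ii)}, $\min X = 0$. Moreover $X \ne \{0\}$, because $f(\{0\}) = \{1_K\} \ne \{y\}$. So $q := \max X$ is a positive element of $H$, and this will be the $q$ of the statement. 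Since $\{y\}^n = \{y^n\}$ is a singleton for every $n$, the idea is to compare $\{0,q\}^n$ with $X^n$ modulo a fixed ``absorbing'' set.

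To get finite intervals, I pass to the subgroup $G$ of $H$ generated by $X$. Being a finitely generated subgroup of $\mathbb Q$, it is cyclic, say $G = g\mathbb Z$ with $g > 0$ and $q = mg$. Its intervals $[0,b]_G$ are finite, and I set $I := [0,q]_G \in \mathcal P_\fin(H)$. Both $X$ and $\{0,q\}$ have minimum $0$ and maximum $q$ in $G$. Applying Lemma~\ref{lem:intervals} repeatedly in the totally ordered group $G$ (each time with $b = kq \succeq q$) gives
\[
nX + I = [0,(n+1)q]_G = n\{0,q\} + I, \qquad \text{for all } n \in \mathbb N^+.
\]
Applying $f$ yields
\[
f(\{0,q\})^n \, f(I) = \{y^n\} \, f(I),
\]
and the right-hand side has at most $|f(I)| =: N$ elements, where $N$ is independent of $n$.

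It remains to pass from a bound on $|f(\{0,q\})^n f(I)|$ to a bound on $|f(\{0,q\})^n|$. This step is the main obstacle, because $K$ need not be cancellative a priori. It suffices to find some $w \in f(I)$ such that $z \mapsto zw$ is injective on $K$, since then
\[
|f(\{0,q\})^n| = |f(\{0,q\})^n w| \le N
\]
for all $n$, which is even stronger than needed. A unit $w$ would do, as would any element whose $L$-coordinate (under $K \cong L \times H$) is cancellative in $L$. My first attempt is therefore to replace $I$ by a translate or a variant whose image is forced to contain a unit. For instance, I would use the units $\{\eta(u)\} = f(\{u\})$ and Proposition~\ref{prop:structural-splitting}\ref{prop:structural-splitting(ii)} to control the $H$-coordinate of the elements of $f(I)$, while the interval identities above are preserved under translation by singletons.

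If that attempt fails, the fallback is the weaker form allowed by the statement: extract infinitely many $n$ for which $f(\{0,q\})^n$ stays bounded. For this I would use pigeonhole on the finitely many subsets of the bounded sets $\{y^n\} f(I)$. The aim is to show that for infinitely many $n$ the set $f(\{0,q\})^n$ is contained in a set of bounded size, for example by iterating the identity $f(\{0,q\}) f(I) = \{y\} f(I)$ to absorb powers.
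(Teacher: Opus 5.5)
Your setup is sound up to the identity $f(\{0,q\})^n f(I) = \{y^n\} f(I)$. The choice of $y$, the facts $\min X = 0$ and $X \neq \{0\}$, and both applications of Lemma~\ref{lem:intervals} are correct. The gap is the step you yourself flag as the main obstacle, and neither of your proposed ways around it works as stated.

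\begin{itemize}
\item Nothing in the setup shows that $f(I)$ contains a unit, or an element whose $L$-coordinate is cancellative.
\item Translating $I$ by a singleton $\{u\}$ only replaces $f(I)$ by $\eta(u) f(I)$. This contains a unit (or a cancellative element) if and only if $f(I)$ already does, so translation gains nothing.
\item The pigeonhole fallback gives no control over $f(\{0,q\})^n$ itself, because $f$ is not known to respect inclusions. For example, from $0 \in I$ you cannot infer $f(\{0,q\})^n \subseteq f(\{0,q\})^n f(I)$ without knowing $1_K \in f(I)$.
\end{itemize}
So, as written, no bound on $|f(\{0,q\})^n|$ is established.

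The missing idea is to make the absorbing set a power of the very set whose powers you want to bound; then no cancellation is needed.

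\begin{itemize}
\item Take $q$ to be the positive generator $g$ of $G$, not $\max X$. Then $I = [0, mg]_G = m\{0,g\}$.
\item Your own computation now gives $nX + I = [0,(n+1)mg]_G = (n+1) I$.
\item Applying $f$ yields $f(I)^{n+1} = \{y^n\} f(I)$. Hence $|f(I)^{n+1}| \le |f(I)|$, since multiplying a finite set by a single element can never increase its size.
\item Because $f(I) = f(\{0,g\})^m$, this bounds $|f(\{0,g\})^{mk}|$ by $N := |f(\{0,g\})^m|$ for every $k \ge 1$, which is the statement.
\end{itemize}

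This is essentially the paper's argument. It proceeds one step at a time via $A + nQ = (n+1)Q$, which gives $B^{n+1} = yB^n$ with $B = f(\{0,q\})^m$ and $q$ the generator of the group generated by $A$.

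With your choice $q = \max X$ and $m > 1$, the sets $n\{0,q\}$ are not intervals of $G$, so they cannot coincide with powers of the absorbing set. That mismatch is exactly what pushed you into the cancellation problem.
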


\begin{proof}
The standard order $\le$ on $\mathbb Q$ turns $H$ into a totally ordered abelian group. Consequently, we are guaranteed by part \ref{prop:structural-splitting(i)} of Proposition \ref{prop:structural-splitting} that $K$ is a commutative monoid and there exists a submonoid $L$ of $K$ with trivial unit group such that $K \cong L \times K^\times$.

Now, suppose that $K$ is not a group. Then $L$ is not a group either. Accordingly, pick a non-unit $y \in L$. We claim that the (non-empty) set $A := f^{-1}(y)$ is not a singleton. Suppose the contrary. Since $H$ is a group, $A$ is then a unit of $\mathcal P_\fin(H)$. Consequently, $\{y\} = f(A)$ is a unit of $\mathcal P_\fin(K)$, because $f$ sends units to units. That is, $y$ is a unit of $L$ (a contradiction). 

On the other hand, we have from part \ref{prop:structural-splitting(ii)} of Proposition \ref{prop:structural-splitting} that the $\le$-minimum of $A$ is zero. Denote by $A^\ast$ the subgroup of $H$ generated by $A$, and set
\[
[a,b]_A := \{x \in A^\ast: a \le x \le b\},
\qquad\text{for } a, b \in A^\ast.
\]
Being a non-trivial finitely generated (additive) subgroup of $\mathbb Q$, the group $A^\ast$ is cyclic of infinite order (and hence isomorphic to $\mathbb Z$). Let $q$ be the positive generator of $A^\ast$ and $r$ be the $\le$-maximum of $A$. There then exists $m \in \mathbb N^+$ such that  $r = mq$, and we may consider the sets
\[
Q := m\{0, q\} \in \mathcal P_\fin(H)
\quad\text{and}\quad
B := f(Q) = f(\{0, q\})^m \in \mathcal P_\fin(K).
\]
To complete the proof, it is enough to check that $|B^n| \le |B|$ for all $n \in \mathbb N^+$.

To this end, fix $n \in \mathbb N^+$. Lemma \ref{lem:intervals} (applied to $A^\ast$ with the order inherited from $H$) yields
\begin{equation}
\label{lem:bounding-the-size-of-powers:eq(1)}
A + [0, nr]_A = [0, (n+1)r]_A.
\end{equation}
On the other hand, it is straightforward from our definitions that 
\[
[0, kr]_A = [0, mkq]_A = \{0, q, \ldots, mkq\} = mk \{0, q\} = kQ,
\qquad\text{for every }
k \in \mathbb N^+. 
\]
It thus follows from Eq.~\eqref{lem:bounding-the-size-of-powers:eq(1)} that $
A + nQ = (n+1) Q$, which in turn implies
\[
B^{n+1} = f( (n+1) Q) = f(A) f(nQ) = yB^n,
\]
and hence $|B^{n+1}| = |yB^n| \le |B^n|$. 
By a routine induction, it is then clear that $|B^n| \le |B|$.
\end{proof}

We finally have all the ingredients needed to prove the main result of the paper.

\begin{proof}[Proof of Theorem \ref{thm:main}]
Assume $H$ is non-trivial, or else the conclusion is obvious. Since $\mathcal P_\fin(H) \cong \mathcal P_\fin(K)$, we have from Proposition \ref{prop:structural-splitting} that $K \cong L \times H$, where $L$ is a submonoid of $K$ with trivial unit group. Suppose for a contradiction that $L \ne \{1_K\}$. 

In light of Lemma
\ref{lem:bounding-the-size-of-powers}, there exist an integer $N \ge 1$ and a non-zero element $q \in H$ such that $|f(A)^n| \le N$ for infinitely many $n \in \mathbb N^+$, where $A := \{0, q\}$. Let $s_n$ be the number of solutions $Y \in \allowbreak \mathcal P_\fin(K)$ to the equation $
BY = B^n$ as $n$ ranges over $\mathbb N^+$, where $B := f(A)$. This is the same as the number of solutions $X \in \mathcal P_\fin(H)$ to the equation
\[
\{0, q\} + X = n\{0, q\}.
\]
Indeed, since the inverse $f^{-1}$ of $f$ is an isomorphism from
$\mathcal P_\fin(K)$ to $\mathcal P_\fin(H)$, it maps solutions of the
former equation bijectively onto solutions of the latter.

It then follows from Proposition \ref{prop:fib-count} that $s_n$ equals $F_n$ (the $n$-th Fibonacci number) for all $n \in \mathbb N$; in particular, $s_n$ is finite. So, letting $k_n$ be the minimum length of an indexed signed binary representation of $F_n$, we infer from Proposition \ref{prop:fiber-counting} and the above that 
\[
k_n \le 2^{|B^n|} \le 2^{N},
\qquad\text{for infinitely many }
n \in \mathbb N^+.
\]
This is, however, impossible by
Lemma~\ref{lem:fib-binary-representations}. We must therefore conclude that
$L = \{1_K\}$, and hence $K \cong H$. In particular, $K$ is a group.
\end{proof}
\section{Prospects for future work}

We conjecture that Question~\ref{que:2}\ref{que:2(2)} has a positive answer for the class of all groups. As a first step, it would
be interesting to settle the question for torsion-free abelian groups.
By Theorem~\ref{thm:main}, the conjecture holds for groups isomorphic
to additive subgroups of $\mathbb Q$.

In a related direction, we have already noted after the proof of
Theorem~\ref{thm:global-iso-maps-unit-group-to-unit-group} that it would
be useful to know whether every isomorphism
$f \colon \mathcal P_{\fin}(H) \to \mathcal P_{\fin}(K)$ between the
finitary power semigroups of two monoids $H$ and $K$ restricts to an
isomorphism from $\mathcal P_{\fin}(H^\times)$ onto
$\mathcal P_{\fin}(K^\times)$.

% -----------------------------------------------------------------------------
\section*{Acknowledgements}

The authors were supported by the Natural Science Foun\-da\-tion of Hebei Province through grant A2023205045. 
%They are grateful to an anonymous referee for their careful reading and helpful comments on an earlier version of this manuscript.
% -----------------------------------------------------------------------------

\end{document}